\begin{document}
\newtheorem{theorem}{Theorem}
\newtheorem{conjecture}[theorem]{Conjecture}
\newtheorem{lemma}[theorem]{Lemma}
\newtheorem{claim}[theorem]{Claim}
\newtheorem{problem}[theorem]{Problem}

\theoremstyle{definition}
\newtheorem{definition}[theorem]{Definition}

\centerline{{\Large \bf Odd $K_{3,3}$ subdivisions in bipartite graphs}%
\footnote{Partially supported by NSF under Grant No.~DMS-1202640. 13 September 2013}}

\bigskip
\bigskip

\centerline{{\bf Robin Thomas}%
}
\smallskip
\centerline{and}
\smallskip
\centerline{{\bf Peter Whalen}}
\bigskip
\centerline{School of Mathematics}
\centerline{Georgia Institute of Technology}
\centerline{Atlanta, Georgia  30332-0160, USA}
\bigskip

\begin{abstract}
\noindent
We prove that every internally $4$-connected non-planar bipartite graph
has an odd $K_{3,3}$ subdivision; that is, a subgraph obtained from $K_{3,3}$
by replacing its edges by internally disjoint odd paths with the same ends.
The proof gives rise to a polynomial-time algorithm to find such a subdivision.
(A bipartite graph $G$ is {\em internally $4$-connected} if it is $3$-connected, has 
at least five vertices, and there is no partition $(A,B,C)$ of $V(G)$ such that
$|A|,|B|\ge2$, $|C|=3$ and $G$ has
no edge with one end in $A$ and the other in $B$.)
\end{abstract}

\section{Introduction}\label{intro}

All graphs in this paper are finite and simple.
Kuratowski's theorem~\cite{Kur} gives a characterization of planar graphs as
those graphs that have no subgraph isomorphic  to a subdivision of $K_5$ or $K_{3,3}$.
Both $K_5$ and $K_{3,3}$ are necessary in the statement, but one can
argue that $K_{3,3}$ is the more important of the two.
It is easy to reduce planarity  testing to $3$-connected graphs, and for
$3$-connected graphs subdivisions of $K_5$ are not needed, in the following
sense.

\begin{theorem}
\label{kuratowskivar}
A $3$-connected graph is not planar if and only if either it is isomorphic to $K_5$
or it has a subgraph isomorphic to a subdivision of  $K_{3,3}$.
\end{theorem}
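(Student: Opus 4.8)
The plan is to deduce the statement from Kuratowski's theorem together with a single lemma that isolates the $K_5$ case. The backward implication is immediate: subdividing edges preserves (non-)planarity and $K_{3,3}$ is non-planar, so a graph containing a subdivision of $K_{3,3}$ is non-planar; and $K_5$ is non-planar. Hence a $3$-connected graph that is isomorphic to $K_5$, or that contains a subdivision of $K_{3,3}$, cannot be planar.

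For the forward implication, suppose $G$ is $3$-connected and non-planar. By Kuratowski's theorem $G$ contains a subgraph isomorphic to a subdivision of $K_5$ or of $K_{3,3}$. In the latter case we are done, so the entire content reduces to the following claim, which is where essentially all the work lies: \emph{if a $3$-connected graph $G$ contains a subdivision of $K_5$ but no subdivision of $K_{3,3}$, then $G\cong K_5$.}

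To prove the claim, fix a subdivision $H$ of $K_5$ in $G$ with branch vertices $b_1,\dots,b_5$ and internally disjoint branch arcs $A_{ij}$ joining $b_i$ to $b_j$. I would argue in two stages. First, show $V(G)=V(H)$: if some vertex $v$ lay outside $H$, then, since $G$ is $3$-connected, the fan version of Menger's theorem yields three internally disjoint paths from $v$ to three distinct vertices of $H$ whose interiors avoid $H$; taking $v$ as a sixth branch vertex, and choosing which three of $b_1,\dots,b_5$ lie on the side of the prospective $K_{3,3}$ opposite $v$ (so that the six required arcs among branch vertices are supplied by arcs of $H$, and the three fan paths supply the arcs at $v$), one reroutes these paths—continuing along an arc of $H$ whenever an attachment lands in the interior of an arc—into a subdivision of $K_{3,3}$, a contradiction. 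Second, show no arc is subdivided: an interior vertex $u$ of an arc has degree $2$ in $H$ but degree at least $3$ in $G$, so, as $V(G)=V(H)$, it sends a chord to some other vertex $w$ of $H$; this extra edge lets $u$ act as a degree-$3$ branch vertex, and the same rerouting produces a subdivision of $K_{3,3}$ (for instance, when $w$ is a branch vertex $b_3$, one checks directly that the two halves of the subdivided arc, the chord $ub_3$, and six further arcs of $H$ assemble into $K_{3,3}$ with sides $\{u,b_4,b_5\}$ and $\{b_1,b_2,b_3\}$). Once both stages are complete, $H$ has no interior vertices and $V(G)=\{b_1,\dots,b_5\}$, so $H=K_5$; since $K_5$ is already complete on these five vertices, there is no room for an additional edge, whence $G=K_5$.

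The main obstacle is the rerouting argument in its full generality. Given three internally disjoint attachments of the ``extra'' structure to $H$, one must verify that \emph{some} assignment of the five branch vertices of $H$ to the two sides of a prospective $K_{3,3}$ realizes all nine required paths disjointly, and this splits into cases according to whether each attachment is a branch vertex or an interior point of an arc, with an extra subtlety when two attachments lie on the same arc and their continuations along it could collide. Here $3$-connectivity does double duty: it supplies the three disjoint attachments, and it forbids the degenerate configurations (such as all attachments hiding behind a $2$-separation $\{b_i,b_j\}$ inside a single arc) that would otherwise block the construction. Carrying out this case analysis is the technical heart of the proof.
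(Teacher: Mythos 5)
The paper itself offers no proof of this theorem---it merely remarks that the statement is well known and easily derived from Kuratowski's theorem---so your proposal must be judged on its own. Your top-level reduction is the right (and standard) one: the backward direction is immediate, and by Kuratowski everything rests on the claim that a $3$-connected graph containing a subdivision of $K_5$ but no subdivision of $K_{3,3}$ is isomorphic to $K_5$, proved by showing first that $V(G)=V(H)$ and then that no arc of $H$ is subdivided. However, there is a genuine gap, located exactly at what you yourself call the technical heart, and moreover the one uniform construction you do describe is false as stated.

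Concretely: (i) The recipe ``take $v$ as a sixth branch vertex, choose three of $b_1,\dots,b_5$ as the side opposite $v$, supply the six cross arcs from $H$ and the three paths at $v$ from the fan, extending along arcs when an attachment is interior'' fails even in non-degenerate configurations. Take attachments $x_1$ interior to $A_{12}$, $x_2$ interior to $A_{34}$, and $x_3=b_5$. Extended fan paths can only reach a triple consisting of $b_5$, one of $b_1,b_2$, and one of $b_3,b_4$; but for every such choice of sides, $A_{12}$ and $A_{34}$ are required cross arcs, and each has a portion consumed by an extended fan path, so no choice works. A $K_{3,3}$ subdivision does exist here, but its sides are $\{v,b_1,b_2\}$ and $\{x_1,x_2,b_5\}$ (with $b_3,b_4$ as internal vertices of paths), i.e., the opposite side must be allowed to contain subdivision vertices of $H$; your case analysis cannot be organized as described. (ii) Three-connectivity does \emph{not} ``forbid the degenerate configurations'': in a $3$-connected $G$ the fan produced by Menger's theorem may well have all three attachments on one closed arc $A_{12}$, and then $H\cup(\mathrm{fan})$ genuinely contains no $K_{3,3}$ subdivision (any two branch vertices of a $K_{3,3}$ subdivision are joined by three internally disjoint paths, so none can be separated from another by the $2$-cut $\{b_1,b_2\}$, and a count shows too few eligible branch vertices on either side). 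What $3$-connectivity actually provides is a further escape path, in $G-b_1-b_2$, from the arc-plus-fan piece to $H-V(A_{12})$, after which yet another round of case analysis is required. The same defect occurs in your second stage: the chord at an interior vertex $u$ may land on $u$'s own arc or at its ends, where no rerouting yields $K_{3,3}$; one must first argue (using $V(G)=V(H)$ and connectivity of $G-b_1-b_2$) that some interior vertex of that arc sends a chord off the arc. Since these cases are where all the work lies, and they are either omitted or addressed by an argument that does not hold, what you have is a correct plan rather than a proof.
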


\noindent
Theorem~\ref{kuratowskivar} is well-known and can easily be derived from
Kuratowski's theorem. 
Since we will be concerned with subdivisions of $K_{3,3}$, we make the following
definition.

\begin{definition}
Let $G$ be a graph and $H$ a subgraph of $G$ isomorphic to a subdivision of $K_{3,3}$.  
Let $v_1, v_2, \ldots, v_6$ be the degree three vertices of $H$ and for $i=1,2,3$ and $j=4,5,6$ let
$P_{ij}$ be the paths in $H$ between $v_i$ and $v_j$.  
We then refer to $H$ as a \emph{hex} or a \emph{hex of $G$}, the vertices $v_i$ as the \emph{feet} of $H$, 
and the paths $P_{ij}$ as the \emph{segments} of $H$.
A segment is {\em odd} if it has an odd number of edges, and {\em even} otherwise.
A hex $H$ is {\em odd} if every segment of $H$ is odd.
\end{definition}

While working on Pfaffian orientations (described later) we were led to the following
variation of Theorem~\ref{kuratowskivar}. If $G$ is $3$-connected, bipartite and non-planar, must it have an odd hex?
Unfortunately, that is not true, as the graph depicted in Figure~\ref{fig:counter} shows, but it is true if we
increase the connectivity slightly.
We say that a bipartite graph $G$ is {\em internally $4$-connected} if it is $3$-connected, has 
at least five vertices, and there is no partition $(A,B,C)$ of $V(G)$ such that
$|A|,|B|\ge2$, $|C|=3$ and $G$ has
no edge with one end in $A$ and the other in $B$.
The following is our main result.

\begin{figure*}[ht]
\begin{center}
\includegraphics[scale=1]{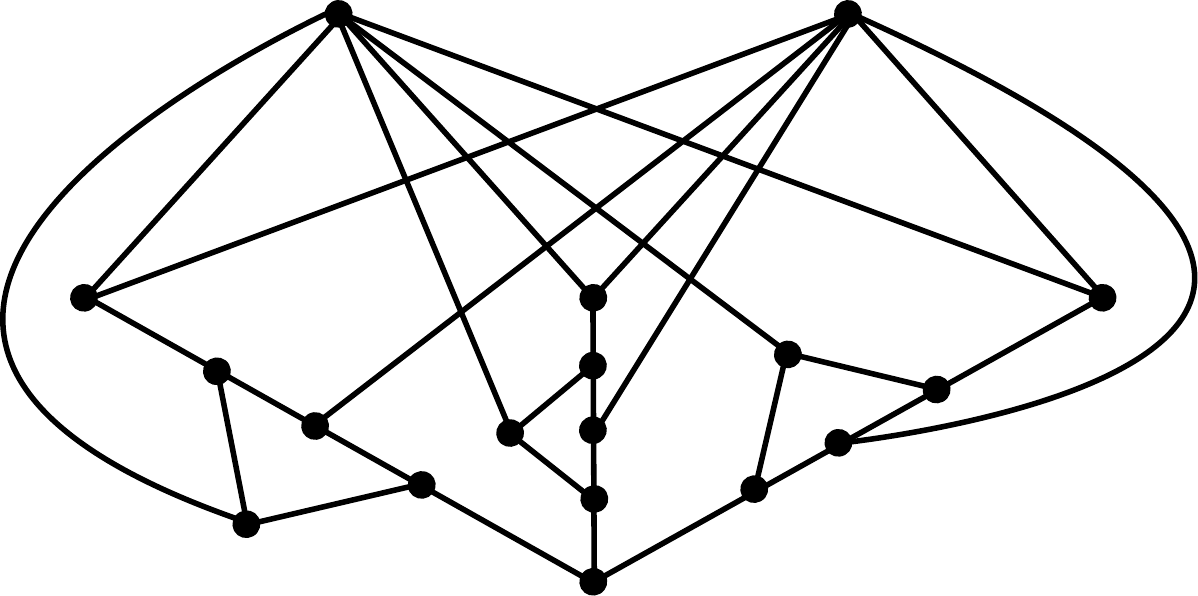}
\caption{A graph showing that Theorem~\ref{MainThm} does not extend to $3$-connected graphs}
\label{fig:counter}
\end{center}
\end{figure*}

\begin{theorem} 
\label{MainThm}
Every internally $4$-connected bipartite non-planar graph has an odd hex.  
\end{theorem}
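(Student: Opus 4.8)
The plan is to recast the statement as a purely topological one and then attack it by induction on $|V(G)|$, using internal $4$-connectivity to relocate the branch vertices of an auxiliary $K_{3,3}$-subdivision until every segment has the right parity. Fix a $2$-colouring $c\colon V(G)\to\{0,1\}$ witnessing that $G$ is bipartite. For any path $P$ with ends $u,v$ we have $|E(P)|\equiv c(u)\oplus c(v)\pmod 2$, so a segment $P_{ij}$ of a hex $H$ is odd exactly when $c(v_i)\neq c(v_j)$. Demanding this for all $i\in\{1,2,3\}$ and $j\in\{4,5,6\}$ forces $c(v_1)=c(v_2)=c(v_3)\neq c(v_4)=c(v_5)=c(v_6)$. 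Hence a hex is odd if and only if its two sides of branch vertices lie in the two different colour classes of $G$, and the theorem is equivalent to the assertion that $G$ contains a $K_{3,3}$-subdivision \emph{respecting} the bipartition. Since $G$ is bipartite it is not isomorphic to $K_5$, so by Theorem~\ref{kuratowskivar} $G$ already contains \emph{some} hex; the whole difficulty is the parity of its feet.

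The second observation is what makes the problem delicate. In a bipartite graph any two paths with the same pair of ends have the same length modulo $2$, so the parity of a segment \emph{cannot} be altered by rerouting it between fixed ends. The only way to change parities is to relocate a foot $v_i$ to a vertex of the opposite colour class, and such a relocation simultaneously toggles the parities of all three segments meeting at $v_i$. Thus the reachable parity patterns on the feet all have the form $p_{ij}=r_i\oplus c_j$, the all-odd pattern is always reachable \emph{combinatorially} (take $r_i\equiv 0$, $c_j\equiv 1$), and the entire content lies in realizing a foot-relocation \emph{geometrically} inside $G$. Note that a naive relocation fails: sliding $v_i$ one step along a segment leaves the new vertex unable to fan out into three internally disjoint segments, so the relocation must be supplied by the connectivity of $G$.

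I would therefore argue with a minimal counterexample $G$ and a hex $H$ chosen to minimise first the number of even segments and then $|E(H)|$. Assuming $H$ is not odd, some foot lies in the wrong colour class. I would examine the bridges of $H$ (the components of $G-V(H)$ with their attachments, together with the chords of $H$) and use Menger's theorem and internal $4$-connectivity to produce three internally disjoint paths that move the offending foot to a vertex of the correct colour while leaving the other eight segments intact; this yields a hex with fewer even segments, contradicting the choice of $H$. When a low-order separation blocks such a linkage, that separation is instead used to split $G$ into smaller internally $4$-connected bipartite pieces, to which induction applies, and the resulting odd hex is lifted back through the separation.

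The hard part, I expect, is precisely this bridge-and-separation analysis. Because parity changes are coupled three-at-a-time, one cannot repair a single bad segment in isolation; one must relocate an entire foot, and the challenge is to do so without disturbing the remaining segments and without producing a configuration that the bipartition forbids. Controlling how the bridges attach under internal $4$-connectivity---so that they either furnish the required relocating fan or expose a usable separation---and arranging that the potential above (the ordered pair of quantities) strictly decreases so that the process terminates, is where the real work of the proof must go.
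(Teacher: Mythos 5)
Your parity framework is sound and matches the paper's implicit setup: a segment is odd exactly when its ends lie in different colour classes, so an odd hex is the same as a hex whose two triples of feet lie in opposite classes, and the possible counts of odd segments ($0,3,4,5,6,9$) mean the all-odd pattern can be reached by successively relocating feet. Your choice of a hex minimising the number of even segments is likewise the paper's notion of an ``optimal'' hex, and the paper too proceeds by improvement steps (optimal hexes have at least $4$, then $5$, then $6$, then all $9$ segments odd). But what you have written is a plan, not a proof, and the gap sits exactly where you say you ``expect'' the hard part to be. The entire content of the theorem is the claim that internal $4$-connectivity lets you realise a parity-correct relocation: Menger's theorem gives you three disjoint paths from a candidate new foot, but gives you \emph{no control over the colour class} of the vertex from which the fan emanates, nor over where the fan attaches to $H$. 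The paper's solution is a pair of parity-sensitive linkage lemmas (Lemmas~\ref{mainlemma} and~\ref{lemma2}): given three disjoint paths from $v$ to $a,b,c$ and a target set $X$, one can either attach a new path to $X$ from a vertex of \emph{prescribed} parity on $P_1$, or obtain one of several more complicated two- or three-path configurations that must each be converted into a hex improvement by hand. Proving these lemmas occupies most of the paper and requires an induction on $|V(G)-X|$ via augmenting sequences with a carefully chosen ``index'' that tracks parity; nothing in your sketch substitutes for this.

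Two further points in your plan would fail as stated. First, your fallback --- when a low-order separation blocks the linkage, split $G$ into smaller internally $4$-connected bipartite pieces and apply induction --- is vacuous in your own setting: your minimal counterexample $G$ is internally $4$-connected, so any separation of order at most $3$ has a side consisting of a single vertex, and there are no smaller pieces to recurse into; moreover, lifting an odd hex across a separation would itself require parity-preserving replacement paths, which is again the very difficulty at issue. The paper has no such decomposition step; its induction lives entirely inside Lemma~\ref{mainlemma}. Second, your hope of relocating one foot ``while leaving the other segments intact'' is too optimistic: the paper's case analysis (see the proofs of Lemmas~\ref{lem:5odd}--\ref{lem:allodd}) shows that the paths produced by the linkage lemmas can attach to distant segments, and the improved hex is often built by a global reconstruction of the form $H+(Q\cup R\cup S)-J$ where $J$ meets four or five segments, not by a local fan at one foot. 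So the approach is the right one in outline, but the theorem's substance --- the parity-controlled linkage machinery and the exhaustive case analysis that turns its outcomes into hex improvements --- is missing from your argument.
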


Let us now explain the notion of a Pfaffian orientation, and how it led us to the above theorem.
Let $H$ be a subgraph of a graph $G$. We say that $H$ is a {\em central subgraph} if $G\backslash V(H)$
has a perfect matching.
(We use $\backslash$ for deletion and $-$ for set-theoretic difference.)
An orientation $D$ of a graph $G$ is called {\em Pfaffian} if every even central cycle has an odd number of
edges directed in either direction of the cycle.
A graph is called {\em Pfaffian} if it admits a Pfaffian orientation. 
Pfaffian orientations have been introduced by Kasteleyn~\cite{Kas,Kast2,Kast3},
who demonstrated
that one can enumerate perfect matchings in a Pfaffian graph in polynomial time.
That is significant, because counting the number of perfect matchings is $\#$P-complete \cite{Valiant} in general graphs.
It is not known whether there is a polynomial-time algorithm to test if a graph is Pfaffian,
but we shall see below that there is one for bipartite graphs.
The latter is noteworthy, because it implies polynomial-time algorithms for other problems
of interest, such as P\'olya's permanent problem, the even directed cycle problem,
the sign non-singular matrix problem, and others.
A survey of Pfaffian orientations may be found in~\cite{ThoPfafSurv}.
The following is a result of Little \cite{Little}.

\begin{theorem}
\label{thm:little}
A bipartite graph is Pfaffian if and only if it does not have an odd hex as a central subgraph.
\end{theorem}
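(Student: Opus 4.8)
The plan is to prove both implications through the standard reformulation of the Pfaffian condition in terms of matching signs. First I would record the \emph{sign lemma} (see~\cite{ThoPfafSurv}): fix a perfect matching $M_0$ of $G$, and for any perfect matching $M$ write $M\triangle M_0$ as a disjoint union of $M$--$M_0$ alternating cycles, each of which is even and central. Conversely, centrality of an even cycle $C$ means $G\backslash V(C)$ has a perfect matching $M'$, and the two alternating halves of $C$ together with $M'$ give two perfect matchings whose symmetric difference is exactly $C$. Consequently an orientation $D$ is Pfaffian if and only if all perfect matchings of $G$ receive the same sign relative to $M_0$, where the sign of $M$ is the product over the cycles of $M\triangle M_0$ of a factor that is $+1$ precisely when the cycle is oddly oriented. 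This equivalence converts statements about orientations of cycles into statements about matchings and back. I would also record the elementary fact, verified by a finite $\mathrm{GF}(2)$ computation on its nine $4$-cycles, that $K_{3,3}$ admits no Pfaffian orientation.

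For \textbf{necessity} suppose $H$ is a central odd hex and let $M_0$ be a perfect matching of $G\backslash V(H)$. Each odd segment $P_{ij}$ has a unique matching $N_{ij}$ covering all its vertices and a unique matching $N^{\circ}_{ij}$ covering its internal vertices but not its feet. For each bijection $\sigma\colon\{1,2,3\}\to\{4,5,6\}$ (equivalently, each perfect matching of $K_{3,3}$) I would assemble a perfect matching $M_\sigma$ of $G$ that agrees with $M_0$ off $H$, uses $N_{i\sigma(i)}$ on the ``used'' segments and $N^{\circ}_{ij}$ on the remaining segments. Then $M_\sigma\triangle M_\tau$ lies inside $H$, and its orientation parity under any $D$ determines, and is determined by, that of the corresponding cycle of $K_{3,3}$ under a suitable induced orientation, the odd length of each segment ensuring this correspondence is well defined. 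Hence a Pfaffian orientation of $G$ would induce an orientation of $K_{3,3}$ in which every cycle arising as a symmetric difference of perfect matchings is oddly oriented, i.e.\ a Pfaffian orientation of $K_{3,3}$, contradicting the fact recorded above.

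For \textbf{sufficiency} I would pass to \emph{matching minors}: bipartite graphs obtained from a central (conformal) subgraph by repeatedly bicontracting degree-two vertices. The sign lemma shows that the Pfaffian property is inherited by matching minors, since restricting a Pfaffian orientation to a central subgraph keeps every central even cycle central, and bicontraction transports orientations. Moreover, bicontracting the internal vertices of each odd segment turns a central odd hex into a copy of $K_{3,3}$; conversely every $K_{3,3}$ matching minor re-expands to a central odd hex, because in a bipartite graph each suppressed path joining the two sides of $K_{3,3}$ must have odd length. Thus \emph{$G$ has a central odd hex if and only if $K_{3,3}$ is a matching minor of $G$}, and together with the non-Pfaffianness of $K_{3,3}$ the ``if'' direction reduces to the statement that a bipartite graph with no $K_{3,3}$ matching minor is Pfaffian.

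That last statement is where the real work lies, and I expect it to be the main obstacle. I would reduce to the $2$-connected, matching-covered case and use an ear decomposition of matching-covered bipartite graphs (Lov\'asz--Plummer), building the orientation one ear at a time and orienting each new ear so that every newly created central even cycle becomes oddly oriented. Equivalently, encoding an orientation as a vector in $\mathrm{GF}(2)^{E(G)}$, the requirements ``$C$ is oddly oriented'' over all even central cycles $C$ form an affine $\mathrm{GF}(2)$ system that is infeasible exactly when some family $\mathcal{F}$ of even central cycles covers every edge an even number of times while $\sum_{C\in\mathcal{F}} b_C = 1$. The crux is to show that a minimal such obstruction --- a minimal inconsistent family, or equivalently the first ear that cannot be consistently oriented --- must exhibit $K_{3,3}$ as a matching minor, hence a central odd hex. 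Carrying out this structural extraction, and verifying that the preliminary reductions (deleting edges in no perfect matching, and splitting along tight cuts into braces) preserve both the hypothesis and the conclusion, is the technical heart of the argument.
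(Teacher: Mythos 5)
The paper does not prove this statement at all: it is quoted, with attribution, as a theorem of Little \cite{Little}, so there is no internal proof to compare yours against. Judged on its own, your proposal establishes only the easy half. The necessity argument---transferring perfect matchings of $K_{3,3}$ to perfect matchings of $G$ along the odd segments of a central odd hex, noting that oddness of the segments makes the parity correspondence well defined, and invoking the finite verification that $K_{3,3}$ has no Pfaffian orientation---is the standard argument and can be made rigorous essentially as you describe.

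The genuine gap is the sufficiency direction: that a bipartite graph with no central odd hex (equivalently, in your matching-minor translation, no $K_{3,3}$ matching minor) is Pfaffian. This is the entire content of Little's theorem, and your proposal explicitly defers it: the claim that a minimal inconsistent $\mathrm{GF}(2)$ family of even central cycles must exhibit a $K_{3,3}$ matching minor is precisely the hard structural theorem, not a verification that follows from the setup. Moreover, the ear-by-ear orientation scheme you sketch breaks at a concrete point: when an ear is added to $G_i$ to form $G_{i+1}$, a cycle of $G_i$ that was \emph{not} central in $G_i$ can become central in $G_{i+1}$, because the perfect matching of its complement may now route through the new ear; such a cycle contains no edge of the new ear, so at that stage you have no remaining freedom to repair its orientation parity. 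This non-monotonicity of centrality under ear additions is exactly why Little's original proof, and the later treatments via the brace structure theorem \cite{RST,McCPolya}, require substantial further work. As written, your text is a plausible research plan for the hard direction, not a proof of it.
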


\noindent
While Little's theorem is elegant, it is not clear how to use it to decide in polynomial time whether a bipartite graph is Pfaffian.
A polynomial-time decision algorithm was obtained in~\cite{RST} using a different method---it is based on a structure theorem
proved in~\cite{RST} and independently in~\cite{McCPolya}.
However, both proofs of the structure theorem are fairly long.

We were wondering whether a simpler-to-justify algorithm may be obtained using Theorem~\ref{thm:little}, as follows.
First a definition.
A bipartite graph $G$ is a {\em brace} if $G$ is connected, has at least five vertices
 and every matching of size at most two is a subset
of a perfect matching.
It is easy to see that the decision problem whether a bipartite graph is Pfaffian can be reduced to braces,
and that every brace is internally $4$-connected.
So let $G$ be a brace.
By Theorem~\ref{thm:little} we want to test whether $G$ has an odd hex as a central subgraph.
To that end we may assume that $G$ has a hex, for otherwise $G$ is Pfaffian.
In fact, we can find a hex in linear time using one of the linear-time planarity algorithms.
The next step is to decide whether $G$ has an odd hex.
It follows from~\cite[Theorem~(1.5)]{RST} that
if a brace has a hex, then it has an odd hex,
but it occurred to us that this should be true more generally than for braces,
and that is how we were led to Theorem~\ref{MainThm}.
The next step in our program is, given an odd hex in a brace $G$, decide whether there is an odd hex that is
a central subgraph.
We were able to do that, and will report on it elsewhere.

The paper is organized as follows.
In the next section we prove two lemmas, and in Section~\ref{sec:proof} we prove Theorem~\ref{MainThm}.
We start with an arbitrary hex, and gradually increase the number of odd segments in it.



\section{Lemmas}

In this section we prove two lemmas that we will need for the proof of Theorem~\ref{MainThm}.

\begin{lemma} \label{mainlemma}
Let $G$ be an internally 4-connected bipartite graph with bipartition $(A, B)$.  Let $a, v \in A$ and $b, c \in B$ with paths 
$P_1 = v \ldots a, P_2 = v \ldots b, P_3 = v \ldots c$ vertex disjoint except for $v$.  Let $X \subseteq V(G)$ with $|X| \geq 2$ be disjoint from $V(P_1) \cup V(P_2) \cup V(P_3)$.  Then at least one of the following holds:
\begin{enumerate}
\item[{\rm(1)}]{There exist $v' \in A, u \in B, x \in X$ and paths 
$P_1' = v' \ldots a, P_2' = v' \ldots b, P_3' = v' \ldots c, P_4' = u \ldots x$ such that $u \in V(P_1')$ and all of the $P_i'$ are vertex disjoint and are disjoint from $X$ except that $v' \in V(P_1') \cap V(P_2') \cap V(P_3'), u \in V(P_1') \cap V(P_4')$ and $x \in X \cap V(P_4')$.}
\item[{\rm(2)}]{There exists $v', s \in A, u, t \in B, x \in X$ and paths $P_1' = v' \ldots a, P_2' = v' \ldots t \ldots s \ldots b, P_3' = v' \ldots c, P_4' = u \ldots s, P_5' = t \ldots x$ such that $u \in V(P_1')$ and all of the $P_i$ are vertex disjoint and are disjoint from $X$ except that $v' \in V(P_1') \cap V(P_2') \cap V(P_3'), u \in V(P_1') \cap V(P_4'), s \in V(P_2') \cap V(P_4'), t \in V(P_2') \cap V(P_5'), x \in V(P_5') \cap X$.}
\end{enumerate}
\end{lemma}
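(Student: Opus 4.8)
The plan is to attach a path from $X$ to the tripod $T := V(P_1)\cup V(P_2)\cup V(P_3)$ and then, using the bipartite structure, to arrange that it meets the $a$-leg $P_1$ at a vertex of $B$, which is exactly the configuration recorded in conclusion (1). First I would fix a vertex $x_0\in X$ and let $D$ be the component of $G\setminus V(T)$ containing $x_0$, with attachment set $Z := N_G(D)\subseteq V(T)$. Since $G$ is $3$-connected and $D$ does not contain all of $V(G)$, the set $Z$ is a cutset and so $|Z|\ge 3$; here internal $4$-connectivity together with $|X|\ge 2$ enters, to rule out the degenerate separations of order three and thereby force $D$ to be attached richly enough that the rerouting below has room to operate. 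Routing inside $D$ from $x_0$ to the vertices of $Z$ then supplies paths from $X$ into $T$ that I am free to combine.

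The core of the argument is a parity analysis of where these paths may be made to meet $T$. If some attachment $u\in Z$ lies in $B$ and on $P_1$, then taking $v'=v$, $P_i'=P_i$, and letting $P_4'$ be a path inside $D\cup\{u\}$ from $u$ to the first vertex $x\in X$ it reaches, yields conclusion (1) immediately. The work is therefore in the complementary case, in which every attachment of $D$ on the $a$-leg lies in $A$ (that is, is $v$, $a$, or an interior $A$-vertex of $P_1$). Here I would use a second attachment of $D$, on $P_2$, on $P_3$, or elsewhere on $P_1$, in order to reroute. Two outcomes are possible, and they are precisely the two conclusions: either the reroute can be absorbed so that the new $a$-leg $P_1'$ acquires a genuine branch vertex $u\in B$ carrying a path to $X$ (conclusion (1) with a modified tripod centered at $v'$), or the reroute is forced through the $b$-leg, creating the extra rung $P_4'$ from $u\in V(P_1')$ to $s\in V(P_2')$ and the tail $P_5'$ from $t\in V(P_2')$ out to $X$ (conclusion (2)). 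Throughout, the bipartition $(A,B)$ pins down the side of each branch vertex, which is what guarantees $u\in B$ and $s\in A$ as stated.

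The main obstacle I expect is exactly this parity bookkeeping: an arbitrary path emanating from $X$ can easily land on an $A$-vertex of the $a$-leg, or on the wrong leg entirely, and coercing it to a $B$-vertex of $P_1$ without colliding with the other two legs is what forces the case split. The delicate point will be to show that when conclusion (1) cannot be achieved, the attachments of $D$ are constrained enough, via internal $4$-connectivity and $|X|\ge 2$ eliminating small separators, that the $b$-leg detour of conclusion (2) is always available and the vertex-disjointness of $P_1',\dots,P_5'$ prescribed in the statement can be simultaneously maintained.
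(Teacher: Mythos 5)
There is a genuine gap. Your opening move is fine: if the component $D$ of $G\setminus V(T)$ containing $x_0$ has an attachment $u\in B$ lying on $P_1$, conclusion (1) follows at once. But everything after that is a restatement of the goal rather than an argument. The hard case is not merely ``every attachment of $D$ on the $a$-leg lies in $A$''; it is that $D$ may have \emph{no} attachment on $P_1$ at all --- all of its attachments may be interior vertices of $P_2\cup P_3$. In that situation no amount of routing inside $D$, and no use of ``a second attachment of $D$,'' can reach $P_1$. Conclusion (2) then requires a second, independent path from a vertex $s\in A\cap V(P_2')$ (situated between the landing point $t$ and $b$, with the right parity) to a vertex $u\in B\cap V(P_1')$; that path lives entirely outside $D$, and its own endpoint may land on the wrong leg, on the wrong side of $t$, or with the wrong parity, which forces yet another reroute. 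This cascades: the connection from $X$ to $P_1$ may have to pass through an unbounded number of such ``hops,'' and your one-component, one-reroute scheme has no mechanism to terminate or control that cascade. Your sentence ``Two outcomes are possible, and they are precisely the two conclusions'' is exactly the statement to be proved.

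This is where the paper's proof spends essentially all of its effort. It formalizes the cascade as an \emph{augmenting sequence} $Q_1,\ldots,Q_k$ of arbitrary length linking $P_1$ to $X$ through intermediate landings on the $P_i$, introduces a secondary parity parameter (the \emph{index}), and then runs an induction on $|V(G)-X|$ over a sequence minimizing length and, subject to that, maximizing index. The parity correction you describe as ``coercing'' the attachment to a $B$-vertex of $P_1$ is achieved there not locally but by repeatedly applying the induction hypothesis to carefully modified path systems (e.g.\ to $vP_1v_1,P_2,P_3$ with an enlarged set $X'$), followed by a long case analysis showing that every outcome either yields (1) or (2), shortens the sequence, or raises the index. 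Without some counterpart to this inductive machinery --- or another device that bounds and repairs the chain of reroutes --- your proposal cannot be completed as written.
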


\begin{definition}
We will refer to the paths $P_1', P_2',$ and $P_3'$ as the \emph{replacement paths} and the paths 
$P_4'$ and, when appropriate, $P_5'$ as the \emph{new paths}.  
In the forthcoming arguments we will apply either the induction hypothesis or Lemma~\ref{mainlemma}
to various carefully selected paths $R_1, R_2, R_3$ to obtain replacement paths $R_1', R_2', R_3'$ and new paths $R_4'$ and, when appropriate, $R_5'$.  
However, we will be able to assume that $R_1' = R_1, R_2' = R_2$ and $R_3' = R_3$ which will simplify our notation.  
We will refer to this assumption as assuming that the \emph{replacement paths do not change}.
\end{definition}

\begin{proof}[Proof of Lemma~\ref{mainlemma}]
Let the paths $P_1,P_2,P_3$ be fixed.
By an \emph{augmenting sequence} we mean a sequence of paths $Q_1, \ldots, Q_k$, 
where the ends of $Q_i$ are $v_{2i-1}$ and $v_{2i}$, $v_{2k} \in X, v_1 \in V(P_1) - \{a, v\}$, each other $v_i$ is
in $P_j \backslash v$ for some $j \in \{1, 2, 3\}$, and all the $Q_i$ are vertex disjoint from one another and disjoint from the $P_i$ and $X$ except for their ends.  Further, for $j > 1$ odd, $v_j$ and $v_{j-1}$ are distinct and both lie on the same $P_i$ and $v_j$ lies between $v$ and $v_{j-1}$ on $P_i$.  
If $v_i, v_j \in V(P_l)$ and $i < j-1$, then $v, v_i, v_j$ appear in $P_l$ in the order listed (possibly $v_i=v_j$).  
We refer to each $Q_i$ as an \emph{augmentation}.  
The \emph{length} of an augmenting sequence is the number of augmentations it has. 
We define the \emph{index} of the augmenting sequence $Q_1,Q_2,\ldots,Q_k$ to be the smallest integer $i$ such that
either  $i$ is odd and $v_i\in A$, or $i$ is even and $v_i\in B$, or $i=2k + 1$.

We proceed by induction on the size of $V(G) - X$. 
Since $G$ is internally 4-connected it follows by the standard ``augmenting path" argument from network flow
theory or from Lemmas~3.3.2 and~3.3.3 in~\cite{DieGT} that there exists an augmenting sequence.


Choose the vertex $v$, paths $P_1, P_2, P_3$, and an augmenting sequence $S = (Q_1, \ldots, Q_k)$
such that the length of $S$ is as small as possible, and, subject to that, the index of $S$ is as large as possible.  
Let $v_1,v_2,\ldots,v_{2k}$ be the ends of the paths $Q_i$, numbered as above.
Then it follows that for $j=2,4,\ldots,2k-2$ the vertex $v_j$ lies on a different path $P_i$ than $v_{j-1}$. 
Note that this lemma is equivalent to showing that the length of $S$ is at most $2$ and that the index of $S$ is at least twice the length of $S$.

Suppose first that the length of $S$ is $1$.  Then we may assume that the index of $S$ is $1$, so $v_1 \in A$.  
Let $X' = X \cup V(v_1P_1a \cup Q_1 \backslash v_1)$.  
Then apply the induction hypothesis to the paths $vP_1v_1, P_2, P_3$ and set $X'$.  
We may assume that the replacement paths do not change.
Suppose we have outcome (1).
Thus there exists a path $P_4$ with ends $u\in B\cap V(vP_1v_1)$ and $x\in X'$, disjoint from $V(P_1\cup P_2\cup P_3)\cup X'$,
except for its ends.
If $x \in X$, then this is exactly outcome (1) in the original situation.  
If $x \in V(Q_1)$, then take $P_4' = P_4 \cup xQ_1v_2$ to find outcome (1) in the original situation.  
If $x \in V(v_1P_1a) - \{v_1\}$, then take $P_1' = vP_1u \cup P_4' \cup xP_1a$, 
$P_4' = uP_1v_1 \cup Q_1$ to again have outcome (1).  
So we must have outcome (2), and so there exist paths $P_4,P_5$ as stated in (2).  
Again, if $x \in X$, this is exactly outcome (2), and if $x \in V(Q_1)$, then taking 
$P_5' = P_5 \cup xQ_1v_2$ again gives outcome (2).  
So we may assume $x \in V(v_1P_1a)$.  
We then take $v' = v$, $P_1' = vP_2tP_5xP_1a$, $P_2' = vP_1uP_4sP_2b$, $P_3' = vP_3c, P_4' = tP_2s, P_5' = uP_1v_1Q_1v_2$,
which is an instance of outcome (2).

So we may assume that the length of $S$ is at least $2$.  Suppose that the index of $S$ is $1$, so $v_1 \in A$.  Without loss of generality, we may assume that $v_2 \in V(P_2)$.  Let $u \in B$ lie on $P_1'$ between $v_1$ and $a$.  Since $\{a, v_1\}$ is not a $2$-separation in $G$, we can apply Menger's Theorem to find three paths from $u$, one to $a$, one to $v_1$ and one to $V(X) \cup V(Q_1) \cup V(Q_2) \cup V(vP_1v_1) \cup V(P_2) \cup V(P_3)$ labeled $R_1, R_2, R_3$ respectively.  
We replace $v_1P_1a$ by $R_1 \cup R_2$ and simply refer to $R_3$ as $R$.  Let the ends of $R$ be $u$ and $r$.  
If $r \in X$, then we have an augmenting sequence of length $1$ contrary to the choice of $S$.  
If $r \in V(Q_i)$, then we have found an augmenting sequence of at most the same length as $S$, but with index at least $2$.  
If $r \in V(P_1)$, then we take $P_1' = vP_1rRuP_1a$ and $Q_1' = uP_1v_1Q_1v_2$, 
which gives an augmenting sequence of the same length but with higher index.  
If $r$ is on $P_2$ between $v_3$ and $b$ with $r \neq v_3$, then taking $Q_1' = R$ is immediately an augmenting sequence with the same length and higher index.  If $r$ is on $P_2'$ between $v_3$ and $v'$, then let $v' = v_1$, $P_1' = v_1P_1a$, $P_2' = v_1Q_1v_2P_2b$, $P_3' = v_1P_1vP_3c$, and $Q_1 = uRrP_2v_3Q_2v_4$ which gives an augmenting sequence of shorter length.  Similarly, if $r$ is on $P_3'$, let $v' = v_1$, $P_1' = v_1P_1a$, $P_2' = v_1Q_1v_2P_2b$, $P_3' = v_1P_1vP_3c$, and $Q_1' = R$, $Q_2' = vP_2v_3Q_2v_4$ which is an augmenting sequence of the same length but higher index.

So we may assume that the index of $S$ is at least $2$.  Suppose the index is exactly $2$, so $v_2 \in B$.  
Then apply the induction hypothesis to the paths $Q_1$, $v_1P_1a$ and $v_1P_1v$ and set
$X': = X \cup V(P_2) \cup V(P_3) \cup V(Q_2) \cup V(Q_3) \cup \cdots \cup V(Q_k) - \{v, v_2\}$.  
We assume, since we may, that the replacement paths do not change.  Suppose we have outcome (1).  
This gives a vertex $u \in A$ on $Q_1$ and $x$ in $X'$ with a path $P_4$ between them.  
If $x \in X$, then take $u' = v_1$, $P_4' = v_1Q_1uP_4x$ to get outcome (1).  If $x \in Q_i$ for $i > i$, then take $Q_1' = v_1Q_1uP_4xQ_i$ which gives the shorter augmenting sequence $Q_1', Q_{i+1}, Q_{i+2}, \ldots, Q_k$.  If $x$ is on $P_2$ between $v_2$ and $b$, then take $P_2' = vP_1v_2Q_1uP_4xP_2b$, $v_2' = u$, and $Q_1' = v_1Q_1u$ which gives an augmenting sequence of higher index.  If $x$ is on $P_2$ between $v$ and $v_2$, then we take $P_2' = vP_2xP_4uQ_1v_2P_2b$, $v_3' = x$, and $Q_2' = xP_2v_3Q_2v_4$ if $v_3 \in v_2P_2x$ and $Q_2' = Q_2$ if $v_3 \in xP_2v$ which gives an augmenting sequence with higher index.  Finally, if $x$ is on $P_3$, then take $v' = u, P_1' = uQ_1v_1P_1a, P_2' = uQ_1v_2P_2b, P_3' = uP_4xP_3c$, and $Q_1' = v_1P_1vP_2v_3Q_2v_4$ which gives a shorter augmenting sequence.

So instead we have outcome (2) and we use the notation for $u, s, t, x, P_4, P_5$ as listed in the outcome.  
It follows that $P_1 = vP_1sP_1tP_1v_1P_1a$.  If $x \in X$, then taking $P_1' = vP_1sP_4uQ_1v_1P_1a, P_2' = P_2, P_3' = P_3, P_4' = sP_1tP_5x$ gives outcome (1).  
If $x \in V(P_2')$ between $v_2$ and $b$, we take $v' = t, P_1' = tP_1a, P_2' = tP_5xP_2b, P_3' = tP_1vP_3c, Q_1' = v_1Q_1v_2P_2v_3Q_2v_4$ and then $Q_1', Q_3, Q_4, ..., Q_k$ is an augmenting sequence of length $k-1$, a contradiction.  If $x$ is on $P_2'$ between $v$ and $v_2$, take $v' = u, P_1' = uQ_1v_1P_1a, P_2' = uQ_1v_2P_2b, P_3' = uP_4sP_1vP_3c, Q_1' = v_1P_1tP_5xP_2v_3Q_2v_4$ which gives a shorter augmenting sequence.  If $x \in V(Q_i)$ with $i > 1$, then taking $Q_1' = tP_5xQ_iv_{2i}$ gives the augmenting sequence $Q_1', Q_{i+1}, Q_{i+2}, ..., Q_k$ which contradicts the choice of $S$.  Finally, if $x$ is on $P_3'$, take $v' = t, v_2' = u, v_3' = s, P_1' = tP_1a, P_2' = tP_1sP_4uQ_1v_2P_2b, P_3' = tP_5xP_3c, Q_1' = v_1Q_1u$, and $Q_2' = sP_1vP_2v_3Q_2$ to get an augmenting sequence of the same length and higher index.

So we may assume the index of $S$ is at least $3$.  Suppose the index is exactly $3$, so $v_3 \in A$.  Note that $v_2$ and $v'$ are completely symmetric with respect to this augmenting sequence (up to $v_3$).  We apply induction to the paths $v_2P_2v_3, Q_1, v_2P_2b$ and set $X' := X \cup V(P_1 \cup vP_2v_3 \cup P_3 \cup Q_2 \cup Q_3 \cup \cdots \cup Q_k) - \{v_1, v_3\}$.  Since we may, we assume the replacement paths do not change.  Suppose first we find outcome (2).  
We use the notation in the outcome for $u, s, t, x$.  If $x \in X$, then taking $P_4' = v_1Q_1tP_5x$ with $u' = v_1$ gives outcome (1).  
If $x$ is on $Q_i, i > 1$, we take $Q_1' = v_1Q_1tP_5xQ_i$ which gives a shorter augmenting sequence.  If $x$ is on $P_2$ between $v$ and $v_3$, $P_3$, or on $P_1$ between $v_1$ and $v$, let $i$ be such that $P_i$ contains $x$, then we can take $v' = v_2, P_1' = v_2P_2uP_4sQ_1v_1P_1a, P_2' = v_2P_2b, P_3' = v_2Q_1tP_5xP_ivP_3c$, $v_1' = u, Q_1' = uP_2v_3Q_2v_4$ to find a shorter augmenting sequence.  Finally, if $x$ is on $P_1$ between $v_1$ and $a$, we take $v' = v_2$, $P_1' = v_2Q_1tP_5xP_1a, P_2' = v_2P_2b, P_3' = v_2P_2uP_4sQ_1v_1P_1vP_3c, v_1' = t, v_2' = s, v_3' = u, Q_1' = tQ_1s, Q_2' = uP_2v_3Q_2v_4$ which gives an augmenting sequence with the same length and higher index.

So instead we consider outcome (1) with the notation for $u, x, P_4$ as in the outcome.  
If $x \in X$, we can take $u' = v_1, s' = v_2, t' = u, P_4' = Q_1, P_5' = P_4$ to find outcome (2).  
If $x$ is on a $Q_i, i > 1$, then we take $v_3' = u, Q_2' = uP_4xQ_i$ which gives an augmenting sequence of at most the same length but higher index.  If $x$ is on $P_2$ between $v'$ and $v_3$, then take $P_2' = vP_2xP_4uP_2b$, $v_3' = u, Q_2' = uP_2v_3Q_2v_4$ to find an augmenting sequence of the same length with higher index.  If $x$ is on $P_3$, we take $v' = v_2, P_1' = v_2Q_1v_1P_1a, P_2' = v_2P_2b, P_3' = v_2P_2uP_4xP_3c, Q_1' = v_1P_2vP_2v_3Q_2v_4$ which gives a shorter augmenting sequence.  If $x$ is on $P_1$ between $v_1$ and $a$, then take $v' = v_2, P_1' = v_2P_2uP_4xP_1a, P_2' = v_2P_2b, P_3' = v_2Q_1v_1P_1vP_3c, v_1' = u, Q_1' = uP_2v_3Q_2v_4$ which gives a shorter augmenting sequence.  Finally, suppose $x$ is on $P_1$ between $v$ and $v_1$ and $x \in A$.  Then consider $P_1' = v_2Q_1v_1P_1a, P_2' = v_2P_2b, P_3' = v_2P_2uP_4xP_1vP_3c, Q_1' = v_1P_1x, Q_2' = uP_2v_3Q_2$ which gives an augmenting sequence of the same length and lower index.

So we must have $x \in B$ on $P_1$ between $v$ and $v_1$.  We apply induction to the paths $vP_2v_3, vP_1x, P_3$ and set $X' := X \cup V(xP_1a \cup v_3P_2b \cup P_4 \cup Q_1 \cup Q_2 \cup Q_3 \cup \cdots \cup Q_k) - \{x, v_3\}$.  This gives us $u_2 \in A$ between $v$ and $v_3$ with a path $P_5$ to $x_2$.  Suppose we have outcome (1).  
Then if $x_2$ is not on $Q_1$, $P_4$, or $xP_1v_1$, then by symmetry we can apply the analysis of the previous paragraph.  If $x_2$ is on $P_4$, then we replace $P_4$ with $uP_4x_2P_5u_2$ and have one of the outcomes above.  If $x_2$ is on $Q_1$, then take $v' = v, P_1' = P_1, P_2' = vP_2u_2P_5x_2Q_1v_2P_2b, P_3' = P_3, v_1' = x, Q_1' = xP_4uP_2v_3Q_2v_4$ to find a shorter augmenting sequence.  Finally, if $x_2$ is on $xP_1v_1$, then take $v' = v_2, P_1' = v_2Q_1v_1P_1a, P_2' = v_2P_2b, P_3' = v_2P_2uP_4xP_1vP_3c, Q_1' = v_1P_1x_2P_5u_2P_2v_3Q_2$ which is a shorter augmenting sequence.  So we must have outcome (2) of the lemma which gives vertices $u_2, s, t, x_2$ and paths $P_5$ and $P_6$.  If $x_2$ is not on either $P_4$ or $v_1P_1x$, then we can apply the analysis from the previous two paragraphs.  Suppose $x_2 \in V(P_4)$.  Then take $v' = v, P_1' = vP_2u_2P_5sP_1a, P_2' = vP_1tP_6x_2P_4uP_2b, P_3' = P_3, Q_1' = u_2P_2v_3Q_2$ to get a shorter augmenting sequence.  Finally, suppose $x_2 \in V(v_1P_1x)$.  Then take $v' = s, P_1' = sP_1tP_6x_2P_1a, P_2' = sP_1xP_4uP_2b, P_3' = sP_5u_2P_2vP_3c, Q_1' = Q, Q_2' = uP_2v_3Q_2$ which is an augmenting sequence of the same length and lower index.

So we may finally assume that the length of $S$ is at least 3 with index at least $4$.  Note that $v_4$ must be on $P_3'$ (still assuming that $v_2$ was on $P_2'$), since otherwise we get a shorter augmenting sequence.  But then take $v' = v_2, P_1' = v_2Q_1v_1P_1a, P_2' = v_2P_2b, P_3' = v_2P_2v_3Q_2v_4P_3c, Q_1' = v_1P_1vP_3v_5Q_3v_6$, which gives a shorter augmenting sequence.
\end{proof}

Lemma~\ref{mainlemma} will suffice for most of our arguments.
However, on one occasion we will need the following strengthening.

\begin{lemma} \label{lemma2}
Let $G$ be an internally 4-connected bipartite graph with bipartition $(A, B)$.  Let $a, v \in A$ and $b, c \in B$ with paths $P_1 = v ... a, P_2 = v ... b, P_3 = v ... c$ vertex disjoint except for $v$.  Let $X \subseteq V(G)$ be disjoint from $V(P_1) \cup V(P_2) \cup V(P_3)$.  Then at least one of the following holds:
\begin{enumerate}
\item[{\rm(A)}]There exist vertices $v' \in A, u \in B, x \in X \cap A$ and paths $P_1' = v' ... a, P_2' = v' ... b, P_3' = v' ... c, P_4' = u ... x$ such that $u \in V(P_1')$ and all of the $P_i'$ are vertex disjoint and are disjoint from $X$ except as specified,
\item[{\rm(B)}] There exist vertices $v', s \in A, u, t \in B, x \in X \cap B$ and paths 
$P_1' = v' ... a, P_2' = v' ... b, P_3' = v' ... c, P_4' = u ... s ... x, P_5' = s ... t$ 
such that $u \in V(P_1')$, $t \in V(X \cup P_2' \cup P_3')$, and all of the $P_i'$ are vertex 
disjoint and are disjoint from $X$ except as specified and except that $t$ may lie on $P_2'$ or $P_3'$,
\item[{\rm(C)}] There exist vertices $v', s \in A, u, t \in B, x \in X$ and paths 
$P_1' = v' ... a$, $P_2' = v' ... t ... s ... b$, $P_3' = v' ... c$, $P_4' = u ... s$, $P_5' = t ... x$ 
such that $u \in V(P_1')$ and all of the $P_i'$ are vertex disjoint and are disjoint from $X$ except as specified,
\item[{\rm(D)}]  There exist vertices $v', s, w \in A, u, t \in B, x, y \in X \cap B$ and paths $P_1' = v' ... u ... w ... t ... a$,
$P_2' = v' ... b, P_3' = v' ... c, P_4' = u ... s ... x, P_5' = s ... t, P_6' = w ... y$ such that all of the $P_i'$ are vertex disjoint and are disjoint from $X$ except as specified and except that $x$ may equal $y$.
\end{enumerate}
\end{lemma}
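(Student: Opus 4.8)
The plan is to reduce Lemma~\ref{lemma2} to Lemma~\ref{mainlemma}. I would apply Lemma~\ref{mainlemma} to $P_1,P_2,P_3$ and $X$ (when $|X|\le 1$ a single augmenting path, supplied directly by internal $4$-connectivity, plays the role of the configuration below, so we may assume $|X|\ge 2$). Outcome (2) of Lemma~\ref{mainlemma} is word-for-word our outcome (C). In outcome (1) we obtain a tripod $P_1',P_2',P_3'$ from $v'$ to $a,b,c$ together with a single new path $P_4'=u\ldots x$, where $u\in B\cap V(P_1')$; if $x\in X\cap A$ this is exactly outcome (A). Hence the only configuration not yet covered is outcome (1) with $x\in X\cap B$, and from it I must manufacture outcome (B) or (D), or reroute into (A) or (C).

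In that residual case $P_4'$ has even length, so its interior contains an $A$-vertex; let $s$ be one such vertex, chosen (say adjacent to $x$) so that $s\,P_4'\,x$ carries no further branches. Since $G$ is internally $4$-connected, $s$ has a neighbour off $P_4'$, and the same augmenting-path argument that produced the original augmenting sequence yields a path $P_5'=s\ldots t$, internally disjoint from $P_1'\cup P_2'\cup P_3'\cup P_4'$, whose far end $t$ lies either in $X$ or on one of the current paths. The crucial point is that the side of $t$ is \emph{not} determined by that of $s$, so I would case on both the side and the location of $t$.

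The favourable landings are quickly disposed of. If $t\in X\cap A$, then $u\,P_4'\,s\cup s\,P_5'\,t$ is a new path from $u$ to a vertex of $X\cap A$, disjoint from the tripod except at $u$, giving outcome (A) with the old endpoint $x$ discarded. If instead $t\in X\cap B$ or $t$ is a $B$-vertex of $P_2'$ or $P_3'$, then $P_1',P_2',P_3'$, the path $P_4'=u\ldots s\ldots x$, and $P_5'=s\ldots t$ are precisely outcome (B), whose statement explicitly tolerates $t$ lying on $P_2'$ or $P_3'$. Every remaining landing of $t$ (on $P_1'$, or at an $A$-vertex of $P_2'\cup P_3'$) I would attack by interchanging arcs of $P_1'$ with the paths $P_4'$ and $P_5'$, in the same style as the long reroute analysis inside the proof of Lemma~\ref{mainlemma}, so as to fall back into (A), (B), or (C); these same reroutings also cover the possibility of upgrading the terminal vertex from $B$ to $A$.

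The hard part will be the residual subcase in which $t$ is forced onto $P_1'$ in a position from which none of these reroutes reaches (A), (B), or (C). Here a single branch does not suffice, and I expect to have to invoke internal $4$-connectivity a second time to extract a further branch $P_6'=w\ldots y$ from $P_1'$ to a second vertex $y\in X\cap B$ (possibly $y=x$), and then to verify that the three attachment points $u,w,t$ occur along $P_1'$ in the order prescribed by outcome (D), with all the required disjointness. Checking that this second application lands with the correct parity and position, and that every alternative landing has already been absorbed by one of the earlier cases, is the crux; as in Lemma~\ref{mainlemma}, it reduces to a careful but routine enumeration of where each newly found path can meet the existing structure.
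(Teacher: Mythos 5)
Your opening reduction coincides with the paper's: outcome (2) of Lemma~\ref{mainlemma} is outcome (C), outcome (1) with $x\in X\cap A$ is outcome (A), so the entire content of the lemma is the residual case $x\in X\cap B$. From there, however, your plan has a genuine gap. A raw connectivity argument does give some $(P_1'\cup P_2'\cup P_3'\cup P_4'\cup X)$-path $P_5'$ leaving $s$ (though your justification that ``$s$ has a neighbour off $P_4'$'' is not literally guaranteed, since all neighbours of $s$ could lie on $P_4'$), but it gives no control whatsoever over the parity of its far end $t$, and parity is exactly the issue. Take the landing $t\in A\cap V(P_2')$: the only access to $X$ in the whole configuration is the edge from $s$ to $x\in X\cap B$, and one can check that no recombination of the arcs of $P_1',\ldots,P_5'$ yields any of (A)--(D): outcome (A) needs a terminal in $X\cap A$; outcome (B) needs $t\in B$; outcome (C) needs a path to $X$ from a $B$-vertex of the $v'$--$b$ path, disjoint from the path through $s$; and outcome (D) needs two disjoint attachments into $X\cap B$, while $x$ has only the one available neighbour $s$. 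So ``interchanging arcs in the same style as the long reroute analysis'' cannot close this case; you need genuinely new paths with prescribed parities, which is the same problem you started with, one level deeper. The identical objection hits your outcome-(D) plan: nothing forces the second branch $P_6'$ to end in $X\cap B$ rather than, say, at an $A$-vertex of $P_2'$, and each bad landing spawns another round of the same difficulty.

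The paper escapes this regress with the idea your proposal is missing: it proves Lemma~\ref{lemma2} by induction on $|V(G)|-|X|$, and in the residual case applies the \emph{induction hypothesis} --- the full four-outcome lemma, with $A$ and $B$ interchanged --- to the tripod re-rooted at $u$ with arms $P_4$, $uP_1a$, $uP_1v$, taking $X'=X\cup V(P_2\cup P_3)\setminus\{x,v\}$. Since $|X'|>|X|$ the induction is legitimate, and since the inductive outcomes come with prescribed parities and positions, every path handed back lands on a vertex of known side; that is what makes the subsequent (still lengthy, and at one point Menger-assisted) case analysis terminate. Your construction caps the path-finding at two rounds ($P_5'$, then possibly $P_6'$), whereas the paper's recursion can unwind to depth on the order of $|V(G)|-|X|$; this is why the ``careful but routine enumeration'' you defer to is not routine and, as set up, cannot be completed.
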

\begin{proof}
We proceed by induction on the size of $|V(G)| - |X|$.  Apply Lemma \ref{mainlemma}.  We may assume that we are in the first outcome of that lemma since the second outcome is outcome (C) of this lemma.  
We may assume that the replacement paths do not change.
Thus there exists a path $P_4$ with ends $u\in B\cap V(P_1)$ and $x\in X$, vertex-disjoint from $P_1\cup P_2\cup P_3$,
except for $u$.
We may assume that $x\in B$, for otherwise (A) holds.


We apply the induction hypothesis to the paths $P_4, uP_1a$ and $uP_1v$, and set $X' = X \cup V(P_2 \cup P_3 \setminus \{x, v\})$.  
Note that $|X'| > |X|$ since $b$ and $c$ are distinct, not in $X$, and in $P_2 \cup P_3$ and $v$ was not in $X$ originally.  
We consider each of the four outcomes separately. We may assume that the replacement paths do not change.

If outcome (A) holds, then we obtain outcome (B) of the lemma.
Next, let
%
%
 us assume that the induction hypothesis yields outcome (B).
Thus there exist vertices $s\in A\cap V(P_4)$, $t\in B$, $y\in A\cap (V(P_2\cup P_3)\cup X)$ 
and $w\in A\cap (V(P_1\cup P_2\cup P_3)\cup X)$,
and paths $P_5$ from $s$ to $y$
and $P_6$ from $t$ to $w$ such that the paths $P_5$ and $P_6$ are disjoint and disjoint
from $V(P_1\cup P_2\cup P_3)\cup X$, except as stated.
If $y \in X$, then we have the outcome (A) by taking $P_1' = P_1, P_2' = P_2, P_3' = P_3, P_4' = uP_4sP_5y$ 
and $v' = v, u = u, x = y$.  So without loss of generality, we may assume $y \in V(P_2)$.  
We are now interested in where $w$ lies.  
If $w$ lies on $P_2$, then we may assume that it lies between $y$ and $b$ since $y$ and $w$ are then symmetric.  
In that case, take $P_1' = P_1, P_2' = vP_2yP_5tP_6wP_2b, P_3' = P_3, P_4' = P_4, P_5' = sP_5t, 
x = x, t = t, s = s, u = u, v' = v$ which is exactly outcome (B).  
If $w$ lies on $P_1$ between $v$ and $u$, take 
$v' = w, s = s, t = t, u = u, x = x, P_1 = wP_1a, P_2 = wP_6tP_5yP_2b, P_3 = wP_1vP_3c, P_4 = uP_4x, P_5 = sP_5t$ 
which is again outcome (B).  
If $w$ lies on $P_3$, , take $v' = w, s = s, t = t, u = u, x = x, 
P_1 = wP_3vP_1a, P_2 = wP_6tP_5yP_2b, P_3 = wP_3c, P_4 = uP_4x, P_5 = sP_5t$, which is again outcome (B).  
So we have that $w$ lies on $P_1$ between $u$ and $a$ (or is $a$).

Let $r \in B$ lie between $v$ and $y$ on $P_2$.  
By Menger's theorem and by replacing $vP_2y$ if necessary we may assume that there exists a path $P_7$
from $r$ to a vertex $z$ not on $vP_2y$ that is disjoint from $P_1\cup\cdots \cup P_6$, except for  its  ends.
If $z \in X$, then keeping $v = v', P_1' = P_1, P_2' = P_2, P_3' = P_3$ and taking $u = u, t = r, s = y, x = z, P_4' = uP_4sP_5y, P_5' = P_7$, this is outcome (C).  
If $z \in V(P_1)$ between $v$ and $u$ (note that this is symmetric with $z \in V(P_3)$), then take 
$v' = y, u = t, s = s, t = r, x = x, P_1' = yP_5tP_6wP_1a, P_2' = yP_2b, P_3' = yP_2vP_3c, 
P_4' = tP_5sP_4x, P_5' = sP_4uP_1zP_7r$ to find outcome (B).  
If $z$ is on $P_1$ between $u$ and $w$, take $v' = y, u = t, s = s, t = u, x = x, P_1' = yP_5tP_6wP_1a, P_2' = yP_2b, P_3' = yP_3rP_7zP_1vP_3c, P_4' = tP_5sP_4x, P_5' = sP_4u$, which is outcome (B).  
If $z$ is on $P_1$ between $w$ and $a$, take 
$v' = y, u = r, t = u, s = v, x = x, P_1' = yP_2rP_7zP_1a, P_2' = yP_2b, P_3' = yP_5tP_6wP_1vP_3c, P_4' = rP_2a, P_5' = uP_4x$,
which gives outcome (C).  
If $z$ is on $P_2$ between $y$ and $b$, then take 
$v = v', P_1' = P_1, P_2' = vP_2rP_7zP_2b, P_3' = P_3, u = u, s = s, x = x, t = r, P_4' = P_4, P_5' = sP_5yP_2r$ 
to again find outcome (B).  
If $z$ is on $P_4$ between $u$ and $s$, take $v' = y, u = t, s = s, t = r, x = x, P_1' = yP_5tP_6wP_1a, P_2' = yP_2b, 
P_3' = yP_2vP_3c, P_4'= tP_5sP_4x, P_5' = sP_4zP_7r$ which is outcome (B).  
If $z$ is on $P_4$ between $s$ and $x$, take $v' = v, u = u, s = y, t = r, P_1' = P_1, P_2' = P_2, P_3' = P_3, 
P_4' = uP_4sP_5y, P_5' = rP_7zP_4x$ to get outcome (C).  
If $z$ is on $P_5$ or $P_6$, then take $v = v', P_1' = P_1, P_2' = P_2, P_3' = P_3, u = u, s = s, x = x, t = r, P_4' = P_4$.  
If $z$ is on $P_5$, then take $P_5 = sP_5zP_7r$ to find outcome (B) and if $z$ is on 
$P_6$, take $P_5 = sP_5tP_6zP_7r$ to find outcome (B). 
This completes the case when induction  yields outcome (B).

Next we assume that induction yields outcome (C).
Thus there exist vertices $s\in A\cap V(P_4)$, $t\in B\cup V(P_1)$, 
$w\in A\cap V(P_1)$ and $y\in X'$, and paths $P_5,P_6$ such that
$v,u,w,t,a$ occur on $P_1$ in the order listed, $P_5$ has ends $s$ and $t$,
$P_6$ has ends $w$ and $y$, and $P_5,P_6$ are disjoint and disjoint from $P_1,P_2,P_3,P_4$,
except for their ends.
Note that if $y \in X \cup B$, this is exactly outcome (D), including the notation.  
Suppose first that $y \in X \cup A$.  Then take $v' = v, u = u, x = y, P_1' = vP_1uP_4sP_5tP_1a, P_2' = P_2, P_3' = P_3, P_4' = uP_1wP_6y$ to find outcome (A).  
So we may assume that $y \in V(P_2)$.  
Then take $v' = w, u = t, s = s, t = u, x = x, P_1' = wP_1a, 
P_2' = wP_6yP_2b, P_3' = wP_1vP_3c, P_4' = tP_5sP_4x, P_5' = sP_4u$, which is outcome (C).  
Since $y \in V(P_3)$ is symmetric with this case, that completes this outcome.

Finally, we assume that induction yields outcome (D).
Thus there exist vertices $s,t\in A\cap V(P_4)$, $r\in B\cap V(P_4)$, $w\in B$ and $y,z\in A\cap X'$,
and paths $P_5,P_6,P_7$ such that $u,s,r,t,x$ occur on $P_4$ in the order listed,
$P_5$ has ends $s$ and $y$ and includes $w$, $P_6$ has ends $w$ and $t$, $P_7$ has ends $r$ and $z$,
and the paths $P_5,P_6,P_7$ are pairwise  disjoint and disjoint from $P_1,P_2,P_3,P_4$,
except for their ends.
Note that $y$ and $z$ are completely symmetric as are $r$ and $w$.  
Suppose that $y \in X$.  Then take $v' = v, u = u, x = y, P_1' = P_1, P_2' = P_2, P_3' = P_3, P_4' = uP_4sP_5y$ to get outcome (A).
So we may assume $y \in V(P_2)$.  
Suppose $z$ lies on $P_2$ between $v$ and $y$ (by symmetry, if $z$ lies on $P_2$, this assumption is without loss of generality).  
Then take $v' = v, u = u, t = r, x = x, s = s, P_1' = P_1, P_2' = vP_2zP_7rP_4sP_5yP_2b, P_3' = P_3, P_4' = uP_4s, 
P_5' = rP_4x$ which is outcome (C).  So $z$ lies on $P_3$.  
Then take $v' = y, u = u, s = s, t = r, x = x, P_1' = yP_2vP_1a, P_2' = yP_2b, P_3' = yP_5sP_4rP_7zP_3c, P_4' = uP_4s, P_5' = rP_4x$ which is again outcome (C).  
This completes this outcome and the proof.
\end{proof}

\section{Proof of Theorem \ref{MainThm}}
\label{sec:proof}


Let $H$ be a subgraph of a graph $G$.  By an {\em $H$-path} in $G$ we mean a path in $G$ with at least one edge, both ends in $V(H)$ and no other vertex or edge in $H$.

Let $H$ be a hex in a graph $G$, let
 $P$ be the union of a set of $H$-paths in $G$, and let
$Q$ be a subgraph of $H$.
We denote by $H + P - Q$ the graph obtained from $H\cup P$ by deleting all edges of $Q$ and then deleting all resulting
isolated vertices.
A typical application will be when $P$ and $Q$ are paths, but we will need more complicated choices.

A hex in a graph $G$ is \emph{optimal} if no hex of $G$ has strictly more odd segments.
We proceed by a series of lemmas, each improving 
a lower bound on the number of odd segments in an optimal hex.

\begin{lemma}
\label{lem:4odd}
Let $G$ be a $3$-connected bipartite graph.  Then every optimal hex of $G$ has at least four odd segments.
\end{lemma}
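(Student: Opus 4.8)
The plan is to exploit bipartiteness to reduce the lemma to a purely combinatorial statement about the feet, and then to a single re-routing. Fix a bipartition $(A,B)$ of $G$. Since in a bipartite graph a path joining two vertices is odd exactly when its ends lie in different classes, a segment $P_{ij}$ is odd if and only if $v_i$ and $v_j$ lie in different classes of $(A,B)$. Thus the number of odd segments of a hex depends only on how its six feet are distributed between $A$ and $B$: writing $a$ for the number of feet among $\{v_1,v_2,v_3\}$ lying in $A$, and $a'$ for the number among $\{v_4,v_5,v_6\}$ lying in $A$, the number of odd segments equals
\[
a(3-a')+(3-a)a'.
\]
A direct check of the sixteen values of $(a,a')$ shows this quantity is at least $4$ precisely when each of $A$ and $B$ contains at least two feet, and is at most $3$ precisely when some class contains at most one foot. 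Hence the lemma is equivalent to the assertion that $G$ has a hex with at least two feet in each class.

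Suppose not, and let $H$ be an optimal hex; then some class, say $B$, contains at most one foot, so at least five feet lie in $A$. Since $\{v_1,v_2,v_3\}$ and $\{v_4,v_5,v_6\}$ each contain three feet, at most one of these two triples can contain a $B$-foot, so after relabelling I may assume $v_1,v_2,v_3\in A$ and that some right foot, say $v_4$, lies in $A$. With the left feet all in $A$ the displayed count simplifies to $9-3a'$, so replacing a single right $A$-foot by a $B$-vertex (decreasing $a'$ by one) would produce a hex with three more odd segments, contradicting optimality. The whole lemma therefore reduces to one re-routing: using only $3$-connectivity, dissolve $v_4$ and promote a neighbouring $B$-vertex to a foot, while keeping $v_1,v_2,v_3,v_5,v_6$ as feet.

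To carry this out I would let $u\in B$ be the neighbour of $v_4$ on $P_{14}$ (note $u\neq v_1$ since $P_{14}$ is even of length at least two) and build a new hex with left feet $v_1,v_2,v_3$, right feet $u,v_5,v_6$, retaining the six segments $P_{i5},P_{i6}$ $(i=1,2,3)$ and replacing $P_{14},P_{24},P_{34}$ by three internally disjoint paths from $u$ to $\{v_1,v_2,v_3\}$. Two such paths come for free, namely $uP_{14}v_1$ and $u v_4 P_{24}v_2$ (here $v_4$ becomes an interior vertex), and I would produce the third, a path from $u$ to $v_3$, from the $3$-connectivity of $G$ via the fan form of Menger's theorem, chosen disjoint from the retained segments and from the first two paths; the cases according to where this path first meets the rest of $H$ are then cleaned up to recover a genuine subdivision of $K_{3,3}$.

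The main obstacle is precisely the disjointness of this third path. Global $3$-connectivity of $G$ need not survive the deletion of the retained right half of the hex, and a priori $v_4$ could become a cut vertex separating $u$ from $v_2,v_3$ once that half is removed, leaving only two disjoint paths available. The delicate point is therefore to use the freedom still at my disposal---which of the two triples to keep intact, which $A$-foot to dissolve, and which $B$-vertex to promote (not necessarily the immediate neighbour $u$)---to force three disjoint paths without appealing to internal $4$-connectivity, since Lemmas~\ref{mainlemma} and~\ref{lemma2} are unavailable at this level of connectivity. I would also dispose of the degenerate base case $G=K_{3,3}$ separately, where the hex is already odd and all nine segments have length one.
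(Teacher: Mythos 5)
Your parity reduction is correct and coincides with the first step of the paper's proof: in a bipartite graph a segment is odd exactly when its two feet lie in different classes, so a hex with at most three odd segments has at least five feet in one class, say $v_1,\dots,v_5\in A$. The problem is the re-routing step, which you yourself flag as the ``main obstacle'': it is a genuine gap, and the specific plan you outline is too rigid to be completed. You fix in advance which foot is dissolved ($v_4$), which vertex is promoted (the neighbour $u$ of $v_4$ on $P_{14}$), and which five feet are retained, and you then need three disjoint paths from $u$ to $v_1,v_2,v_3$ avoiding the retained two thirds of the hex. Three-connectivity gives no control over where paths leaving the interior of $P_{14}$ attach to $H$: a Menger/fan argument yields paths from $u$ to $V(H)$, but beyond the two ends $v_1,v_4$ they may all land on, say, $P_{15}\cup P_{16}$. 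In that situation your target hex (with $v_1$ kept as a foot and $v_4$ dissolved) need not exist at all, yet you have derived no contradiction with optimality, so the proof cannot be finished along these lines without a substantially new idea.

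The paper's proof avoids this rigidity and needs only \emph{one} path. It takes an internal vertex $u\in B$ of $P_{14}$ and, using $3$-connectedness (replacing $P_{14}$ if necessary), a single $H$-path $Q$ from $u$ to an arbitrary $w\in V(H)-V(P_{14})$, with no constraint on where $w$ lands. By symmetry $w$ lies on $P_{15}$, $P_{16}$, $P_{24}$, $P_{25}$ or $P_{26}$, and in each case one deletes a suitable subpath $R$ of $H$ (namely $v_1P_{15}w$, $v_1P_{16}w$, $v_4P_{24}w$, $P_{24}$, $P_{24}$, respectively); the graph $H+Q-R$ is again a hex, and a parity count shows it always has strictly more odd segments than $H$, contradicting optimality. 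The crucial flexibility is that \emph{which} old foot gets dissolved depends on where $w$ attaches: if $w\in V(P_{15})$, it is $v_1$, not $v_4$, that becomes an internal vertex of the new hex. Your plan lacks exactly this flexibility, which is why the ``third disjoint path'' cannot be forced. To repair your write-up, keep the counting argument, but replace the fixed target hex by this one-path-plus-case-analysis scheme.
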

\begin{proof}
Let $(A, B)$ be a bipartition of $G$ and let $H$ be an optimal hex in $G$ with feet and segments numbered as in the definition of a hex.  We may assume for a contradiction that $H$ has at most three odd segments.  It follows that at least five feet of $H$ belong to the same set $A$ or $B$, and so we may assume that $v_1, v_2, \ldots, v_5$ all belong to $A$.  Thus $P_{14}$ has an internal vertex $u$ that belongs to $B$.  Since $G$ is $3$-connected we may assume, by replacing $P_{14}$ if necessary, that there exists an $H$-path $Q$ with one end $u$ and the other end, say $w$, in $V(H)-V(P_{14})$. By symmetry, we may assume that $w$ belongs to $P_{15}, P_{16}, P_{24}, P_{25}$, or $P_{26}$.  Let $R$ be defined as $v_1P_{15}w$, $v_1P_{16}w$, $v_4P_{24}w$, $P_{24}$ or $P_{24}$, respectively.  Then $H+Q-R$ is a hex with strictly more odd segments than $H$, contrary to the optimality of $H$.
\end{proof}

\begin{lemma}
\label{lem:5odd}
Let $G$ be an internally $4$-connected bipartite graph.  Then every optimal hex of $G$ has at least five odd segments.
\end{lemma}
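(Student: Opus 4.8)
The plan is to mimic the structure of the proof of Lemma~\ref{lem:4odd}, but push the argument one step further using the stronger connectivity hypothesis and, crucially, the machinery of Lemma~\ref{mainlemma}. We start with an optimal hex $H$ whose feet and segments are numbered as usual. By Lemma~\ref{lem:4odd} we already know $H$ has at least four odd segments, so suppose for contradiction that $H$ has exactly four. The first step is to understand the parity configuration: with exactly four odd segments, the two even segments partition the feet in a constrained way. A counting argument on the bipartition classes (each odd segment joins feet in opposite classes, each even segment joins feet in the same class) should force the two even segments into essentially one configuration up to symmetry, say that the even segments are $P_{14}$ and $P_{25}$ and that the feet split so that four feet lie in one class. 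The goal is to locate a suitable internal vertex on an even segment and attach new paths to convert an even segment into an odd one without losing parity elsewhere --- thereby producing a hex with five odd segments.

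The heart of the argument is where Lemma~\ref{mainlemma} enters. Having fixed the even segment $P_{14}$ with an internal vertex $u$ in the ``wrong'' class, I would set up three vertex-disjoint paths emanating from a chosen foot (playing the role of $v,P_1,P_2,P_3$ in Lemma~\ref{mainlemma}) — naturally these would be built from the segments of $H$ meeting at $v_1$, namely portions of $P_{14}, P_{15}, P_{16}$ (or a symmetric choice) — and take $X$ to be a pair of vertices on the far side of the hex, for instance two feet or internal vertices on the opposite segments. Internal $4$-connectivity guarantees $|X|\ge 2$ is achievable and that no small separation obstructs the augmenting structure. Applying Lemma~\ref{mainlemma} yields either outcome (1), a single new path $P_4'$ reaching $X$, or outcome (2), the more elaborate configuration with paths $P_4',P_5'$ and the extra branch vertices $s,t$. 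In each outcome I would perform a rerouting of the hex, replacing an appropriate subgraph $Q$ of $H$ by the new paths via the $H+P-Q$ operation, and then verify that the resulting graph is again a hex (that is, the six feet still have degree three and the segments are internally disjoint odd paths) with strictly more odd segments than $H$.

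The main obstacle, and where the bulk of the casework will live, is the parity bookkeeping in each of the two outcomes. When we splice in outcome (1) or (2), the new paths interact with the existing segments at the branch vertices $u,s,t$, and we must check that the rerouted segments are genuinely odd — this depends delicately on which bipartition class each attachment vertex lies in, and the paths furnished by Lemma~\ref{mainlemma} come with prescribed class memberships ($v'\in A$, $u\in B$, etc.) that we must track. In outcome (2) there is the additional subtlety that the new vertices $s,t$ create a branching inside what was a single segment $P_2'$, so the rerouting must carefully redistribute the segments among the six feet so that the hex structure is preserved. I expect that the symmetry reductions at the start (fixing the even segments and the class of the feet) will cut the number of genuine cases down substantially, and that in each surviving case the optimality of $H$ is contradicted because the rerouting strictly increases the odd-segment count. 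The delicate point to get right is ensuring the new path to $X$ lands on a segment whose parity flip is advantageous rather than neutral or harmful, which is exactly the reason the stronger Lemma~\ref{mainlemma} (rather than a bare application of $3$-connectivity as in Lemma~\ref{lem:4odd}) is needed here.
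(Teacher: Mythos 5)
Your opening parity count is wrong, and the error undermines the whole plan. A hex has nine segments, so ``exactly four odd segments'' leaves \emph{five} even segments, not two. Moreover, since a segment is even precisely when its two feet lie in the same class of $(A,B)$, the set of even segments is determined entirely by how the six feet distribute over the bipartition; a short count shows the number of odd segments can only be $0,3,4,5,6$ or $9$, and that the four-odd configuration is, up to symmetry, $v_1,v_2,v_4,v_5\in A$ and $v_3,v_6\in B$, with even segments $P_{14},P_{15},P_{24},P_{25},P_{36}$. Your claimed configuration (even segments exactly $P_{14}$ and $P_{25}$, i.e.\ seven odd segments) is impossible. This is not just bookkeeping: your proposed application of Lemma~\ref{mainlemma} to the three segments $P_{14},P_{15},P_{16}$ at the foot $v_1$ violates that lemma's hypotheses in the true configuration, because the lemma requires the center and exactly one path-end in one class and the other two path-ends in the other class, whereas from $v_1\in A$ the segment-ends are $v_4,v_5\in A$ and $v_6\in B$. (This is exactly why the paper can run that plan for Lemma~\ref{lem:6odd}, where the five-odd configuration gives $v_4\in A$ and $v_5,v_6\in B$, but not here.) A second concrete flaw: taking $X$ to be ``a pair of vertices on the far side'' is inadequate, since the lemma only guarantees its output paths avoid $X$ and the three given paths; unless $X$ contains \emph{all} remaining vertices of the hex, the new paths may run through $H$ and cannot be spliced in by the $H+P-Q$ operation.

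The paper's actual proof is structured differently. It starts exactly as Lemma~\ref{lem:4odd}: pick an internal vertex $u\in B$ of the even segment $P_{14}$, and use $3$-connectivity (not Lemma~\ref{mainlemma}) to obtain an $H$-path $Q$ from $u$ to some $w\in V(H)-V(P_{14})$; a direct rerouting $H+Q-R$ disposes of every location of $w$ except the single hard case $w\in B\cap V(P_{25})$. Only in that residual case is Lemma~\ref{mainlemma} invoked, and it is applied centered at the \emph{internal} vertex $u$ with $A$ and $B$ swapped, to the three paths $v_1P_{14}u$, $uP_{14}v_4$, $Q$, with $X$ the entire rest of the hex minus $w$; its two outcomes are then either fed back into the earlier cases or, in one final subcase, used to build explicitly a hex with nine odd segments. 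Your proposal defers all of this --- the explicit reroutings, the parity verification, and the terminal construction --- to ``I expect the casework to close,'' so even setting aside the errors above, the proof is not carried out.
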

\begin{proof}
Let $(A, B)$ be a bipartition of $G$ and let $H$ be an optimal hex in $G$ with feet and segments numbered as in the definition of a hex. 
By Lemma~\ref{lem:4odd} we may assume for a contradiction that $H$ has exactly four odd segments.  
It follows that two feet of $H$ in $\{v_1, v_2, v_3\}$ and two feet of $H$ in $\{v_4, v_5, v_6\}$ belong to the same set $A$ or $B$, and so we may assume that $v_1, v_2, v_4, v_5$ all belong to $A$.  Thus $P_{14}$ has an internal vertex $u$ that belongs to $B$.  Since $G$ is $3$-connected we may assume, by replacing $P_{14}$ if necessary, that there exists an $H$-path $Q$ with one end $u$ and the other end, say $w$, in $V(H)-V(P_{14})$.  
By symmetry, we may assume that $w$ belongs to $P_{15}, P_{16}, P_{25} P_{26}$, or $P_{36}$.
If $w$ belongs to $P_{15}, P_{16}$, $P_{36}$,  $A\cap V(P_{25}\cup P_{26})$, or $B\cap V(P_{26})$,
let $R$ be defined as $v_1P_{15}w, v_1P_{15}w$,  $P_{24}$, $P_{24}$, or $P_{16}$, respectively.  
Then $H+Q-R$ is a hex with strictly more odd segments than $H$, contrary to the optimality of $H$.

So we may assume that $w \in B\cap V(P_{25})$.  
Note that this is the last case and is not symmetric with anything else, so 
it suffices to reduce to any of the previous cases.
We now apply Lemma \ref{mainlemma} to the paths $v_1P_{14}u, uP_{14}v_4, Q$ and $X:=V(H) - (V(P_{14}) - \{w\})$ 
with $A$ and $B$ swapped. 
We may assume that the replacement paths do not change.  
If outcome (1) of the lemma holds, then there exist
vertices $y \in A\cap V(Q)$, $z \in X$ and an $H\cup Q$-path $R$ from $y$ to $z$.  
If $z \in B\cap V(P_{25})$, we may assume it belongs to $v_5P_{25}w$.  
Then we can replace $P_{25}$ by $v_2P_{25}wQyRzP_{25}v_5$, $Q$ by $vQy$, and apply the case above 
where $w \in A\cap V(P_{25})$.  
If $z \notin B$ or $z$ is not on $P_{25}$, then we can replace $Q$ with $uQyRz$ and apply one of the previous cases.

So we may assume that the second outcome of the lemma holds.
Thus there exist  vertices $a \in A\cap V(Q)$, $b \in B\cap V(P_{14})$, $c \in A\cap V(P_{14})$, 
and $d \in X$ and disjoint  $H\cup Q$-paths $R$ between $a$ and $b$ and $S$ between $c$ and $d$.  
We may assume that $b$ belongs to $v_1P_{14}u$.  
Then we can replace $P_{14}$ by $v_1P_{14}bRaQuP_{14}v_4$ and $Q$ by $uP_{14}cRd$ 
which puts us in one of the previous cases unless $d \in B\cap V(P_{25})$.  
So we may assume $d \in B\cap V(P_{25})$, and that it belongs to $wP_{25}v_5$.  
Let $H'$ be the hex obtained from $H\cup S\cup R\cup Q$ by deleting
$V(P_{15}\cup v_5P_{25}d\cup P_{34}\cup P_{35}\cup P_{36})-\{v_1,d,v_4,v_6\}$.
Then  $H'$ has nine odd segments,  contrary to the optimality of $H$.
\end{proof}

\begin{lemma}
\label{lem:6odd}
Let $G$ be an internally $4$-connected bipartite graph.  Then every optimal hex of $G$ has at least six odd segments.
\end{lemma}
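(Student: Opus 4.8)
The plan is to argue by contradiction. Assume $G$ is internally $4$-connected bipartite, let $H$ be an optimal hex, and by Lemma~\ref{lem:5odd} suppose for a contradiction that $H$ has exactly five odd segments. Write $p$ for the number of feet among $v_1,v_2,v_3$ lying in $A$ and $q$ for the number among $v_4,v_5,v_6$ lying in $A$. A segment $P_{ij}$ is odd exactly when its two feet lie in different classes, so the number of odd segments is $p(3-q)+(3-p)q=3p+3q-2pq$, which equals five precisely when $\{p,q\}=\{1,2\}$. Interchanging the two triples of feet and swapping the classes $A,B$ affect neither optimality nor the count, so I would normalise to $v_1,v_2\in A$, $v_3\in B$, $v_4\in A$, $v_5,v_6\in B$. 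The even segments are then exactly $P_{14},P_{24}$ (both feet in $A$) and $P_{35},P_{36}$ (both feet in $B$), and the composite symmetry (swap $A\leftrightarrow B$ and swap the two triples) carries $\{P_{14},P_{24}\}$ onto $\{P_{35},P_{36}\}$. Hence it suffices to work from the single even segment $P_{14}$, retaining the residual symmetries $v_1\leftrightarrow v_2$ and $v_5\leftrightarrow v_6$.

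Since $P_{14}$ has both feet in $A$ it has an interior vertex $u\in B$, and by $3$-connectivity I may assume, after replacing $P_{14}$ if necessary, that there is an $H$-path $Q$ from $u$ to some $w\in V(H)-V(P_{14})$. The heart of the proof is a case analysis on the segment containing $w$, reduced by the above symmetries to the representatives $P_{15},P_{24},P_{25},P_{34},P_{35}$. The favourable cases are those in which $w$ lies on a segment meeting $v_4$ (namely $P_{24}$ or $P_{34}$) with $w\in B$: deleting the subpath $uP_{14}v_4$ makes $v_4$ interior and promotes $w$ to a foot, yielding the hex $H+Q-uP_{14}v_4$ with feet $\{v_1,v_2,v_3\}$ and $\{w,v_5,v_6\}$. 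Its nine segments have parities fixed by the colours of their ends; a direct check gives six odd segments, contradicting optimality. The remaining landings are \emph{unfavourable}: for them no single reroute raises the count, because internalising a foot of the wrong type (e.g.\ $v_1,v_2,v_5$) forces a forbidden same-triple segment.

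For the unfavourable landings I would follow the template of Lemma~\ref{lem:5odd}: apply Lemma~\ref{mainlemma} with the roles of $A$ and $B$ interchanged to the three paths $v_1P_{14}u$, $uP_{14}v_4$, $Q$ together with the set $X:=V(H)-(V(P_{14})-\{w\})$, assuming (as I may) that the replacement paths do not change. Outcome~(1) supplies an $H\cup Q$-path from a vertex of $Q$ to some $z\in X$; according to where $z$ lies one either reroutes $Q$ through it and returns to an already-settled landing, or splices it in to gain a sixth odd segment. Outcome~(2) supplies two further disjoint paths, and exactly as in the construction closing Lemma~\ref{lem:5odd} these let me reassemble a hex with at least six odd segments. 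Should some landing resist Lemma~\ref{mainlemma}---because its three paths admit no clean split and the lemma furnishes too few feet---I would reach for the strengthening Lemma~\ref{lemma2}, whose extra outcomes (B) and (D) provide precisely the additional foot or $X$-endpoint needed to complete the reroute.

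The main obstacle is this last point. Unlike the five-odd lemma, where a single stubborn landing needed the connectivity lemma, the balanced $(2,1)$ colour pattern here makes \emph{most} landings stubborn, so the bulk of the argument lives in the unfavourable cases. The crux is to organise these so that each reduces either to a favourable landing, to outcome~(1) or~(2) of Lemma~\ref{mainlemma}, or to the single application of Lemma~\ref{lemma2}, and then to verify in every branch that the reassembled hex really does have at least six odd segments. Once the correct replacement and new paths are identified in each branch, the parity bookkeeping is routine, since every segment's parity is determined by the colours of its two ends.
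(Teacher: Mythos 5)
Your high-level plan (normalise the colours so that $v_1,v_2,v_4\in A$, $v_3,v_5,v_6\in B$, work from the even segment $P_{14}$, and case-analyse where an $H$-path $Q$ from an interior vertex $u\in B\cap V(P_{14})$ lands) is indeed the paper's strategy, and your parity count for the favourable landings is correct. But there is a genuine gap at the very first step. You obtain $Q$ from plain $3$-connectivity, so $w$ may land on $P_{15}$ or $P_{16}$. Such a landing cannot be repaired by any single reroute: since $P_{14}$ and $P_{15}$ share the foot $v_1$, the only subgraphs $H+Q-R$ that are hexes are those with $R=v_1P_{15}w$ or $R=v_1P_{14}u$, and these have at most five odd segments whatever the colour of $w$. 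Moreover your fallback for unfavourable landings---Lemma~\ref{mainlemma} with $A$ and $B$ swapped, applied to $v_1P_{14}u$, $uP_{14}v_4$, $Q$---cannot even be invoked when $w\in A$: with the roles swapped, the lemma needs the hub $u\in B$ together with exactly one far end in $B$ and two in $A$, but the three far ends $v_1,v_4,w$ would then all lie in $A$. The paper avoids this dead end precisely by \emph{not} using the naive $3$-connectivity argument here: its first step is to apply Lemma~\ref{mainlemma} to the paths $P_{14},P_{15},P_{16}$ with $X:=V(H)-V(P_{14}\cup P_{15}\cup P_{16})$, so that outcome (1) delivers an $H$-path from $u\in B\cap V(P_{14})$ to $w\in X$, a landing guaranteed to avoid $P_{15}\cup P_{16}$ (outcome (2) is then disposed of by explicit reroutes). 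Without this move your case analysis does not close.

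Two further problems. First, your favourable/unfavourable classification is wrong in a way that aggravates the gap above: $w\in A\cap V(P_{24})$, $w\in A\cap V(P_{25})$ and $w\in V(P_{35})$ all admit single reroutes (take $R=v_4P_{24}w$, $R=P_{24}$, $R=P_{34}$ respectively; in the first of these $w$ becomes internal, so its colour is irrelevant), contrary to your claim that no single reroute raises the count. The genuinely stubborn landings are only $w\in B\cap V(P_{25})$ (up to the $v_5\leftrightarrow v_6$ symmetry), besides the $P_{15}\cup P_{16}$ landings your setup wrongly admits; by misclassifying, you route many $A$-coloured landings into a fallback that cannot accept them. Second, for the hard landing $w\in B\cap V(P_{25})$ your fallback does coincide with the paper's second application of Lemma~\ref{mainlemma}, but the ensuing work---e.g.\ outcome (1) with the new endpoint in $B\cap V(P_{26})$ requires exhibiting a specific nine-odd-segment hex, and outcome (2) requires the constructions with $F=S\cup Q\cup R$ and the three choices of $J$---is substantive and is only gestured at. Note also that Lemma~\ref{lemma2} is never needed in this lemma; the paper reserves it for Lemma~\ref{lem:allodd}, so invoking it here is a sign the case structure has not been pinned down.
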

\begin{proof}
Let $(A, B)$ be a bipartition of $G$ and let $H$ be an optimal hex in $G$ with feet and segments 
numbered as in the definition of a hex.  
By Lemma~\ref{lem:5odd} we may assume $H$ has exactly five odd segments and that
$v_1, v_2, v_4 \in A$ and $v_3, v_5, v_6 \in B$.  
We now apply Lemma \ref{mainlemma} to the paths $P_{14}, P_{15}, P_{16}$ and 
set $X:=V(H) - V(P_{14} \cup P_{15} \cup P_{16})$.
We may assume that the replacement paths do not change.

Suppose first that outcome (2) of the lemma holds.  
Thus we may assume that there exist vertices $u \in B\cap V(P_{14})$,
$w \in A\cap V(P_{15})$, $y \in B\cap V(v_1P_{15}w)$ and $z\in X$, and 
disjoint $H$-paths $Q$ from $u$ to $w$ and $R$ from $y$ to $z$.  
Then by symmetry we may assume that $z$ belongs to one of $P_{24}, P_{25}, P_{34}, P_{35}$.  
Let $T$ be, respectively, $v_2P_{24}z\cup P_{25}\cup P_{26}$, 
$v_5P_{25}z\cup P_{26}$,
$P_{25}\cup v_3P_{34}z$, 
$P_{36} \cup v_5P_{35}z$.  Then the hexes $H + (Q\cup R) - T$ have
at least six odd segments which is a contradiction.

So we may assume that outcome (1) of the lemma holds.  
Thus there exists a vertex $u \in B\cap V(P_{14})$ and an $H$-path $Q$ from it to $w \in X$.  
Then by symmetry we may assume that $w$ belongs to one of $P_{24}, P_{25}, P_{34},$ and $P_{35}$.  
For $w$ on $P_{24}, P_{34}, P_{35}$ or $A\cap V(P_{25})$, let $R$ be, respectively, 
$v_4P_{24}w, P_{36}, v_4P_{34}w, P_{34}$.  
Then $H+Q-R$ is a hex with more odd segments than $H$ which contradicts the optimality of $H$.

So we may assume $w \in B\cap V(P_{25})$.  
We now apply Lemma \ref{mainlemma} to the paths $v_1P_{14}u, uP_{14}v_4, Q$ and set  $X:=V(H) - V(P_{14}) - \{w\}$.  
Suppose that outcome (1) of the lemma holds.
Thus there exist vertices $y \in A\cap V(Q)$ and $z \in X$ and an $H\cup Q$-path $R$ from $y$ to $z$.  
If $z \in B\cap V(P_{25})$, we may assume it belongs to $v_5P_{25}w$.  
Then we can replace $P_{25}$ by $v_2P_{25}wQyRzP_{25}v_5$, $Q$ by $uQy$, 
and apply the case above where $w \in A\cap V(P_{25})$.  
If $z \in B \cap V(P_{26})$, then the hex $H + (Q \cup R) - (v_5P_{25}w \cup v_2P_{26}z \cup P_{35})$ 
has nine odd segments which contradicts the optimality of $H$.  
If $z \notin B$ or $z$ is not on $P_{25}$ or $P_{26}$, then we can replace $Q$ with $uQyRz$ and apply one of the previous cases.

So we may assume that the second outcome of the lemma holds.
Thus there exist vertices $a \in A\cap V(Q)$, $b \in B\cap V(v_1P_{14}u)$, 
$c \in A\cap V(P_{14})$ and $d \in X$, and disjoint $H\cup Q$-paths $R$ between $a$ and $b$ and $S$ between $c$ and $d$.  
Then we can replace $P_{14}$ by $v_1P_{14}bRaQuP_{14}v_4$ and $Q$ by $uP_{14}cRd$
which puts us in one of the previous cases, unless $d \in B$ and $d$ is on $P_{25}$ or $P_{26}$.  
Let $F = S \cup Q \cup R$.  If $d$ is on $wP_{25}v_5$, let $J = P_{35} \cup P_{26} \cup v_5P_{25}d \cup P_{15}$;
if $d$ is on $wP_{25}v_2$, let $J=P_{15}\cup P_{24}\cup P_{26}\cup v_2P_{25}d$;
 and if $d$ is on $P_{26}$, let $J = P_{24} \cup P_{35} \cup v_6P_{26}d$.  
Then the hexes $H + F - J$ have at least six odd segments, which contradicts the optimality of $H$.
\end{proof}

\begin{lemma}
\label{lem:allodd}
Let $G$ be an internally $4$-connected bipartite graph.  Then in every optimal hex of $G$ every segment is odd.
\end{lemma}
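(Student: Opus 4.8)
The plan is to reduce to a single configuration by a parity count and then mimic the rerouting arguments of the preceding lemmas, calling on Lemma~\ref{mainlemma} and, in one resistant case, on the strengthened Lemma~\ref{lemma2}. Fix a bipartition $(A,B)$ and an optimal hex $H$ with feet and segments numbered as usual. Because $G$ is bipartite, a segment $P_{ij}$ is odd if and only if $v_i$ and $v_j$ lie in different classes of $(A,B)$. Writing $p$ for the number of feet among $v_1,v_2,v_3$ lying in $A$ and $q$ for the number among $v_4,v_5,v_6$ lying in $A$, the number of odd segments equals $3p+3q-2pq$, whose only attainable values for $p,q\in\{0,1,2,3\}$ are $0,3,4,5,6,9$. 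By Lemma~\ref{lem:6odd} an optimal hex has at least six odd segments, so if some segment is even the count is exactly $6$, forcing $(p,q)\in\{(0,2),(2,0),(1,3),(3,1)\}$. These four cases are interchanged by swapping the classes $A,B$ and by interchanging the two triples of feet, so I would assume without loss of generality that $v_1,v_2,v_3,v_4\in A$ and $v_5,v_6\in B$; then the three even segments are exactly $P_{14},P_{24},P_{34}$, all meeting at $v_4$. The goal is to produce a hex with strictly more odd segments than $H$, contradicting optimality.

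First I would apply Lemma~\ref{mainlemma} to the three segments $P_{14},P_{15},P_{16}$ issuing from $v_1$, with $a=v_4$, $b=v_5$, $c=v_6$ and $X:=V(H)-V(P_{14}\cup P_{15}\cup P_{16})$, exactly as in the proof of Lemma~\ref{lem:6odd}; this is a legitimate instance since $v_1,v_4\in A$ and $v_5,v_6\in B$. Outcome (1) supplies a vertex $u\in B$ internal to $P_{14}$ together with an $H$-path $Q$ from $u$ to some $w\in X$, while outcome (2) supplies the more elaborate pair of disjoint $H$-paths described there. I would then run a case analysis on the location of $w$ among the remaining segments $P_{24},P_{25},P_{26},P_{34},P_{35},P_{36}$, exploiting the symmetries $v_2\leftrightarrow v_3$ and $v_5\leftrightarrow v_6$. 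In each favourable landing a single reroute of the form $H+Q-R$ (or $H+(Q\cup R)-T$ in the second outcome) makes $P_{14}$ odd while destroying the oddness of at most one previously odd segment, yielding a net increase and hence an immediate contradiction with the optimality of $H$.

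The resistant cases are those in which $w$ falls on one of the other even segments $P_{24},P_{34}$, or on a far segment in such a position that a single $H$-path can repair only one even segment while a second even segment survives. Here one application of Lemma~\ref{mainlemma} does not suffice, and I would re-apply the machinery to the paths produced in the first step; this is the one occasion anticipated in the text where the extra outcome (D) of Lemma~\ref{lemma2}, which delivers two \emph{simultaneously} disjoint connections into $X$ with endpoints $x,y$, is exactly what is needed to flip two even segments at once. A careful parity ledger then confirms that the resulting hex again has strictly more odd segments than $H$.

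The main obstacle I anticipate is not any single clever move but the bookkeeping: there are many landing positions for $w$, and, after the second application, for the new endpoints, and for each one must exhibit the replacement and new paths explicitly and verify both that the result is again a hex and that the odd-segment count strictly increases. Isolating the unique configuration that genuinely requires outcome (D) of Lemma~\ref{lemma2}, rather than the weaker Lemma~\ref{mainlemma}, is the delicate point. Once every case yields a contradiction, no optimal hex can have an even segment, so every segment is odd; combined with Theorem~\ref{kuratowskivar}, which guarantees a hex in an internally $4$-connected non-planar bipartite graph since $K_5$ is not bipartite, this establishes Theorem~\ref{MainThm}.
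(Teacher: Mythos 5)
Your opening is sound and matches the paper: the count $3p+3q-2pq$ of odd segments does take only the values $0,3,4,5,6,9$, so by Lemma~\ref{lem:6odd} an optimal hex with an even segment has exactly six odd segments and, up to symmetry, $v_1,v_2,v_3,v_4\in A$ and $v_5,v_6\in B$; your first move (Lemma~\ref{mainlemma} applied to $P_{14},P_{15},P_{16}$ with $X=V(H)-V(P_{14}\cup P_{15}\cup P_{16})$) is exactly the paper's. But you then misidentify the resistant cases, and this is a genuine error, not a presentational one. Landing $w$ on $P_{24}$ or $P_{34}$ is the \emph{easy} case: since $u\in B$, the hex $H+Q-v_4P_{24}w$ has feet $v_1,v_2,v_3\in A$ and $u,v_5,v_6\in B$, hence nine odd segments and an immediate contradiction. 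The genuinely resistant case, which is where the paper spends almost all of its effort, is $w\in B\cap V(P_{25}\cup P_{26}\cup P_{35}\cup P_{36})$: there both ends of $Q$ lie in $B$, and one can check that \emph{no} single reroute of $H\cup Q$ even yields a hex (for instance, in $H+Q-v_1P_{14}u$ the six degree-three vertices are $v_2,v_3,v_4,v_5,v_6,w$, and the segment $v_5P_{15}v_1P_{16}v_6$ joins two vertices that would have to lie in the same triple, so the adjacency pattern is not that of $K_{3,3}$). Your guiding heuristic---that each favourable landing ``destroys the oddness of at most one previously odd segment, yielding a net increase''---is exactly what fails here: parities are forced by the bipartition classes of the feet, and the obstruction is hex-ness of the rerouted graph, not a parity ledger.

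The second gap is in the resolution of the hard case, which in your proposal is an appeal rather than an argument, and is aimed at the wrong configuration. The paper does not re-apply the machinery ``to the paths produced in the first step'' (that is what the proofs of Lemmas~\ref{lem:5odd} and~\ref{lem:6odd} do); instead it forgets $u$, $Q$, $w$ entirely and applies Lemma~\ref{lemma2} to the \emph{original} paths $P_{14},P_{15},P_{16}$ with the same $X$, using the first round of case analysis only to rule out outcomes (A) and (C). What remains then requires explicit rerouting arguments for \emph{both} outcome (B) and outcome (D): you never mention outcome (B), whose case analysis (on where the second path's far end lands among $P_{15}$, $wP_{25}v_5$, $P_{35}$, $P_{16}$, $P_{26}$, $P_{36}$) is as substantial as the one for (D), and for (D) you give no reroutes at all. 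Since this case analysis is precisely where the content of the lemma lives, the proposal as written locates the difficulty in the wrong place and leaves the actual proof of the difficult case to ``bookkeeping''; it is a reasonable plan sharing the paper's strategy, but it is not a proof.
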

\begin{proof}
Let $(A, B)$ be a bipartition of $G$ and let $H$ be an optimal hex in $G$ with feet and segments 
numbered as in the definition of a hex.  
By Lemma~\ref{lem:6odd} we may assume $H$ has exactly six odd segments and that that 
$v_1, v_2, v_3, v_4 \in A$ and $v_5, v_6 \in B$.  
We now apply Lemma \ref{mainlemma} to the  paths $P_{14}, P_{15}, P_{16}$ and 
set $X:=V(H) - V(P_{14} \cup P_{15} \cup P_{16})$.

Suppose first that outcome (2) of the lemma holds.
Thus there exist vertices  $u \in B\cap V(P_{14})$,
$w \in A\cap V(P_{15})$,
 $y \in B$ and $z\in X$, and 
disjoint $H$-paths $Q$ from $u$ to $w$ and $R$ from $y$ to $z$.  
By  symmetry we may assume that $z$ belongs to $P_{24}\backslash v_2, P_{25}, or P_{26}$.  
Let $J$ be, respectively, $P_{35} \cup zP_{24}v_2$, $P_{34} \cup zP_{25}v_5$, 
or $P_{34} \cup zP_{26}v_6$. 
Then the hexes $H + (Q\cup R) - J$ each have nine odd segments, which contradicts the optimality of $H$.

So we may assume that outcome (1) one of the lemma holds.
Thus there exist  $u \in B\cap V(P_{14})$ and an $H$-path $Q$ from it to $w \in X$.  
Then we may assume that $w$ belongs to $P_{24}$ or $P_{25}$.  
If $w \in V(P_{24})$, let $R = v_4P_{24}w$.  
If $w \in A\cap V(P_{25})$, let $R = P_{24}$.  
Then $H + Q - R$ is a hex with nine odd segments, a contradiction.  
Thus it remains to handle the case when $w\in B\cap V(P_{25}\cup P_{26}\cup P_{35}\cup P_{36})$.

We now forget $w, Q, u$ and instead apply Lemma \ref{lemma2} to the paths $P_{14}, P_{15}, P_{16}$ 
and set $X:=V(H) - V(P_{14} \cup P_{15} \cup P_{16})$.
Outcomes (A) and (C) give results already ruled out by the case analysis from applying Lemma \ref{mainlemma}, 
so we may assume that  outcomes (B) or (D) hold.

Suppose that outcome (B) holds.  
Thus there exist vertices  $u \in B\cap V(P_{14})$, $w \in B \cap X$, $t \in A$ and $s \in B$,  
and a $H$-path $Q = u ... t ... w$ and a $H\cup Q$-path $R = t ... s$.  
By the previous analysis and symmetry we may assume that $w \in V(P_{25})$, so we are interested in where $s$ lies.  
The above case analysis handles the cases where $s$ is on $P_{24}$ or $P_{34}$, so we need only worry about 
the case where $s$ belongs to $P_{15}$, $wP_{25}v_5$, $P_{35}$, $P_{16}, P_{26}, P_{36}$.  
Then, respectively, let $J$ be defined as $P_{24} \cup wP_{25}v_5$, $P_{24} \cup wP_{25}s$, 
$P_{24} \cup wP_{25}v_5$, $P_{34} \cup P_{35} \cup P_{36}$, $P_{34} \cup P_{35} \cup P_{36}$, 
$P_{34} \cup P_{35} \cup v_3P_{36}s$.  
Then  $H + (Q\cup R) - J$ is a hex with nine odd segments, which contradicts the optimality of $H$.

So we must have outcome (D).  
Thus there exist vertices $u, w \in V(P_{14}) \cap B$, $r \in V(P_{14}) \cap A$, $s \in A$ and  $x, y \in X \cap B$, 
such that $v_1,w,r,u,v_4$ occur on $P_{14}$ in the order listed, and there exists a $H$-path
$Q = u ... s ... x$ and disjoint $H\cup Q$-paths $R = w ... s$ and $S = r ... y$.
Without loss of generality, we may assume that $x \in V(P_{25})$.  
By symmetry and taking advantage of the previous cases, we may assume that $y$
belongs to $xP_{25}v_5, P_{26}, P_{35}$, or $P_{36}$.  
Let $J$ be $P_{15} \cup P_{34} \cup P_{35} \cup P_{36} \cup v_5P_{25}y$, 
$P_{24} \cup xP_{25}v_5 \cup P_{36}$, 
$P_{15} \cup v_3P_{35}y \cup P_{34} \cup P_{36}$, 
or $P_{24} \cup xP_{25}v_5 \cup v_3P_{36}y$, respectively.  
Then $H + (Q\cup R\cup S) - J$ are each hexes with nine odd segments, a contradiction.
\end{proof}

\begin{proof}[Proof of Theorem~\ref{MainThm}]
Let $G$ be an internally $4$-connected non-planar bipartite graph.
By Theorem~\ref{kuratowskivar} the graph $G$ has a hex, and hence it has an optimal hex $H$.
By Lemma~\ref{lem:allodd} every segment   of $H$ is odd, as desired.
\end{proof}

\baselineskip 11pt
\vfill
\noindent
This material is based upon work supported by the National Science Foundation.
Any opinions, findings, and conclusions or
recommendations expressed in this material are those of the authors and do
not necessarily reflect the views of the National Science Foundation.

\end{document}